\documentclass[a4paper,11pt,reqno]{amsart}
\usepackage[utf8]{inputenc}
\usepackage{amsfonts}
\usepackage{amsmath}
\usepackage{amssymb}
\usepackage{amsthm}
\usepackage{verbatim}
\usepackage{color}
\usepackage{hyperref}
\usepackage{epstopdf}

\usepackage{thmtools}
\usepackage{theoremref}
\theoremstyle{plain}
\newtheorem{theorem}{Theorem}[section]
\newtheorem{lemma}[theorem]{Lemma}
\newtheorem{corollary}[theorem]{Corollary}
\newtheorem{proposition}[theorem]{Proposition}

\theoremstyle{definition}

\ProvideTextCommand{\DJ}{OT1}{\leavevmode\raisebox{-.5ex}{\makebox[0pt][l]{\hskip-.07em\accent"16\hss}}D}

\numberwithin{equation}{section}

\title[Mixed second moment of Dirichlet $L$-functions]{The Mixed Second Moment of quadratic Dirichlet $L$-functions with prime conductor}

\author{J. MacMillan}
\address{Department of Mathematics, Swansea University, Swansea, SA1 8EN, United Kingdom}
\email{j.b.macmillan@swansea.ac.uk}

\date{\today}

\subjclass[2010]{Primary 11M38; Secondary 11M06, 11T06, 11T55}
\keywords{mixed moments, Dirichlet $L$-functions, function fields}

\begin{document}
	
	\begin{abstract}
		We establish an asymptotic formula for the mixed second moment involving the second derivative of the completed $L$-function, when averaged over monic, irreducible polynomials in the function field setting. 
	\end{abstract}
	\maketitle
	
	\section{Introduction}
	The study of moments of various families of L-functions is a classical topic in Analytic Number Theory. In the case of the Riemann-zeta function, a problem is to understand the asymptotic behaviour of 
	\[
	M_k(T)=\frac{1}{T}\int_0^T\left|\zeta\left(\frac{1}{2}+it\right)\right|^{2k}dt. 
	\]
	In this setting the second moment was computed by Hardy and Littlewood \cite{HardyLittlewood1916} and the fourth moment was computed by Ingham \cite{Ingham1927}. For the $2k^{\text{th}}$ moment, it is conjectured that as $T\rightarrow\infty$
	\[
	M_k(T)\sim a_kg_k(\log T)^{k^2}
	\]
	where $a_k$ is an arithmetic factor in terms of an Euler product and $g_k$ is an geometric factor. Using random matrix theory, Keating and Snaith \cite{KeatingSnaith2000R} conjectured a precise value for $g_k$ and Conrey et al. \cite{CFKRS2005} developed a heuristic recipe to conjecture 
	\[
	M_k(T)\sim P_k(\log T),
	\]
	where $P_k$ is an explicit polynomial of degree $k^2$. Ingham \cite{Ingham1927} also studied the mixed moments of the Riemann-zeta function and showed that for integers $\mu,\nu\geq 0$ we have 
	\[
	\frac{1}{T}\int_0^T\zeta^{(\mu)}\left(\frac{1}{2}+it\right)\zeta^{(\nu)}\left(\frac{1}{2}-it\right)dt\sim \frac{1}{\mu+\nu+1}(\log T)^{\mu+\nu+1}
	\]
	Since $\overline{\zeta^{(\mu)}\left(\frac{1}{2}+it\right)}=\zeta^{(\mu)}\left(\frac{1}{2}-it\right)$, then Ingham's result implies
	\[
	\frac{1}{T}\int_0^T\left|\zeta^{(\mu)}\left(\frac{1}{2}+it\right)\right|^2dt\sim\frac{1}{2\mu+1}(\log T)^{2\mu+1}.
	\]
	In a recent paper, Keating and Wei \cite{KeatingWei2024} used Random matrix theory to conjecture an asymptotic formulae for the joint moments of higher order derivatives of the Riemann-zeta function. For integers $n_1,n_2 \geq 0$ and $k\geq 1$ with $1\leq M\leq k$, they conjectured
	\[
	\frac{1}{T}\int_0^T\left|\zeta^{(n_1)}\left(\frac{1}{2}+it\right)\right|^{2M}\left|\zeta^{(n_2)}\left(\frac{1}{2}+it\right)\right|^{2k-2M}dt\sim a_{k,M}(n_1,n_2)c_k(\log T)^{k^2+2Mn_1+2(k-M)n_2}
	\]
	where $a_{k,M}(n_1,n_2)$ is an explicit formulae depending on $k,M,n_1,n_2$ and $c_k$ is an arithmetic factor in terms of an Euler product. In particular, they showed that for $k=M=1$, $n_1=\mu$, $n_2=0$, they showed that their asymptotic formula agrees with Ingham's result. \\
	
	For the family of Dirichlet L-functions $L(s,\chi_d)$, with $\chi_d$ a real primitive Dirichlet character modulo $d$ defined by the Jacobi symbol $\chi_d(n)=\left(\frac{d}{n}\right)$, a problem is understanding the asymptotic behaviour of
	\[
	\frac{1}{D}\sum_{0<d\leq D}L\left(\frac{1}{2},\chi_d\right)^k,
	\]
	when summing over fundamental discriminants $d$ as $D\rightarrow\infty$. In this context Jutila \cite{Jutila1981} obtained an asymptotic formula for the first and second moment. When averaging over fundamental discriminants $8d$ Soundararajan \cite{Soundararajan2000} established an asymptotic formula for the second and third moments with a power saving error term. Assuming the Generalised Riemann hypothesis Shen \cite{Shen2021} established the first term in the asymptotic formula for the fourth moment. Subsequently Shen and Stucky \cite{ShenStucky2026} removed the condition of the Generalised Riemann hypothesis and obtained the next three terms in the asymptotic formula. It is conjectured that 
	\begin{equation}\label{conjfunddisc}
		\frac{1}{D}\sum_{0<d\leq D}L\left(\frac{1}{2},\chi_d\right)^k\sim a_kg_k(\log D)^{\frac{k(k+1)}{2}},
	\end{equation}
	where $a_k$ is an arithmetic factor in the form of a Euler product and $g_k$ is a geometric factor. Using Random matrix theory, Keating and Snaith \cite{KeatingSnaith2000L} conjectured a precise value for $g_k$ and Conrey et al. \cite{CFKRS2005} developed a heuristic to conjecture the lower terms. \\
	
	A similar problem is to study the moments of quadratic Dirichlet L-functions with prime conductor at the central point $s=\frac{1}{2}$. In this context, Jutila \cite{Jutila1981} established an asymptotic formula for the first moment, and Baluyot and Pratt \cite{BaluyotPratt2022} established an asymptotic formula for the second moment under the condition of the generalised Riemann hypothesis. A similar conjecture to (\ref{conjfunddisc}) is believed for these moments due to both families having symplectic symmetry type.\\
	
	In function fields, an analogous problem is to study the asymptotic behaviour of 
	\[
	\frac{1}{|\mathcal{H}_{2g+1}|}\sum_{D\in\mathcal{H}_{2g+1}}L\left(\frac{1}{2},\chi_D\right)^k,
	\]
	where $\mathcal{H}_{2g+1}$ denotes the space of monic, square-free polynomials of degree $2g+1$ over $\mathbb{F}_q[t]$ as $g\rightarrow\infty$. In this context, Andrade and Keating \cite{AndradeKeating2012} established an asymptotic formula for the first moment and then they \cite{AndradeKeating2014} adapted the heuristic of Conrey et al. \cite{CFKRS2005} to conjecture an asymptotic formula for all integral moments. Florea \cite{Florea2017first} improved the asymptotic formula for the first moment by establishing a secondary main term of size $gq^{-\frac{4g}{3}}$ while improving the error term. Florea \cite{Florea2017secondthird,Florea2017fourth} went on to establish an asymptotic formula for the second, third and fourth moments. The study of the moments of derivatives for these families of Dirichlet L-functions is also an interesting topic. Since the first derivative of $L(\frac{1}{2},\chi_D)$ is a constant multiple of itself (see \cite[Lemma~2.1]{BaeJung2019}), then it is natural to study the moments of the second and higher derivatives. In this setting, an asymptotic formula for the second derivative is established by Andrade and Rajagopal \cite{AndradeRajagopal2016} and then improved by Bae and Jung \cite{BaeJung2019}. A general formula for the $\mu^{\text{th}}$ derivative of $L(\frac{1}{2},\chi_D)$ was established by Andrade and Jung \cite{AndradeJung2021}.  Defining the completed L-function by $\Lambda(s,\chi_D)=q^{g(s-\frac{1}{2})}L(s,\chi_D)$, then Djankovi\'c and {\DJ}oki\'c \cite{DjankovicDjokic2021} considered the mixed second moment involving the second derivative of the completed L-function.
	\begin{theorem}[Djankovi\'c and {\DJ}oki\'c]\thlabel{DD2021}
		Let $q$ be a prime with $q\equiv 1(\text{mod }4)$. Then as $g\rightarrow\infty$ we have that
		\[
		\frac{1}{|\mathcal{H}_{2g+1}|}\sum_{D\in\mathcal{H}_{2g+1}}\frac{\Lambda''\left(\frac{1}{2},\chi_D\right)\Lambda\left(\frac{1}{2},\chi_D\right)}{(\log q)^2}=P(2g+1)+O(q^{g(\epsilon-1)}),
		\]
		where $P(x)$ is a polynomial of degree 5.
	\end{theorem}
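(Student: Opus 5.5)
The plan is to expand both $\Lambda$ and $\Lambda''$ at $s=\tfrac12$ through the approximate functional equation. Then the averaging over $D\in\mathcal{H}_{2g+1}$ becomes a sum over character values, which we evaluate with the standard Florea/Andrade--Keating machinery. Since $D$ is squarefree of degree $2g+1$, $L(s,\chi_D)$ is a polynomial in $u=q^{-s}$ of degree $2g$. Write
\[
L(s,\chi_D)=\sum_{n=0}^{2g} a_n(D)\,q^{-ns},\qquad a_n(D)=\sum_{f\in\mathcal{M}_n}\chi_D(f).
\]
The functional equation $L(s,\chi_D)=q^{g(1-2s)}L(1-s,\chi_D)$ says exactly that $\Lambda(s,\chi_D)=\Lambda(1-s,\chi_D)$. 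It also gives $a_{2g-n}=q^{g-n}a_n$. Hence
\[
\Lambda(s,\chi_D)=\sum_{n=0}^{g} a_n q^{-n/2}\bigl(q^{(g-n)(s-1/2)}+q^{-(g-n)(s-1/2)}\bigr),
\]
where the $n=g$ term is counted once. Differentiating twice at $s=\tfrac12$ gives the exact formula
\[
\frac{\Lambda''(\tfrac12,\chi_D)}{(\log q)^2}=2\sum_{n=0}^{g-1}(g-n)^2 q^{-n/2}a_n(D).
\]
Together with $\Lambda(\tfrac12,\chi_D)=L(\tfrac12,\chi_D)=2\sum_{n<g}q^{-n/2}a_n+q^{-g/2}a_g$, the product becomes a double sum over monic $f_1,f_2$ with $\deg f_1\le g$, $\deg f_2\le g$. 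The weight is $(g-\deg f_1)^2q^{-(\deg f_1+\deg f_2)/2}$, multiplied by $\sum_{D}\chi_D(f_1f_2)$.

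Next we average using the standard lemma for $\sum_{D\in\mathcal{H}_{2g+1}}\chi_D(f)$, or equivalently the Poisson summation formula of Florea. When $f_1f_2=\square$, the average is $|\mathcal{H}_{2g+1}|\prod_{P\mid f_1f_2}(1+|P|^{-1})^{-1}$ up to an error of size $q^{g(\epsilon-1)}$-relative. When $f_1f_2$ is not a square, we use the bound $\sum_{D}\chi_D(f)\ll q^{g}|f|^{\epsilon}$ obtained via Pólya--Vinogradov/the Riemann hypothesis for curves, after removing the squarefree condition by Möbius inversion over $D=A^2B$. This non-square contribution is $\ll q^{g}\sum_{\deg f_i\le g}|f_1f_2|^{-1/2+\epsilon}\ll q^{g(1+\epsilon)}$. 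After dividing by $|\mathcal{H}_{2g+1}|\asymp q^{2g+1}$, it is $O(q^{g(\epsilon-1)})$, which is the claimed error.

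The main term is the diagonal $f_1f_2=\ell^2$. We write it via a generating function: with $u$ marking $\deg f_1$ and $v$ marking $\deg f_2$,
\[
\mathcal{Z}(u,v)=\sum_{f_1f_2=\square}\prod_{P\mid f_1f_2}(1+|P|^{-1})^{-1}u^{\deg f_1}v^{\deg f_2}.
\]
From its Euler product this factors as $\zeta_A(u^2/q)\zeta_A(v^2/q)\zeta_A(uv/q)\,\mathcal{B}(u,v)$, where $\zeta_A(w)=(1-qw)^{-1}$ and $\mathcal{B}$ is analytic in a larger polydisc. The truncations $\deg f_i\le g$ and the weight $(g-\deg f_1)^2$ are extracted by Cauchy integrals over $u,v$ against $\frac{1}{1-u}$-type kernels, with the weight produced by applying $(g-u\partial_u)^2$. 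Shifting contours past the poles at $u=1$, $v=1$ (the order-three pole structure coming from the three zeta factors) gives residues that are polynomials in $g$. The error from the shifted contours is $O(q^{-g/2+\epsilon g})$ or better.

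The total degree comes out as $3$ (from the second moment) plus $2$ (from the weight), giving degree $5$. Rewriting in the variable $2g+1$ yields $P(2g+1)$. The hypothesis $q\equiv1\pmod 4$ is used only so that quadratic reciprocity has no sign, letting $\chi_D(f)$ be treated symmetrically.

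The main obstacle is the residue computation at the multiple pole. Verifying that the leading coefficient is nonzero (so the degree is exactly $5$), and tracking the derivatives of $\mathcal{B}$ at $(1,1)$, requires careful bookkeeping of the differential operator against the order-three pole. I would do this by expanding $\mathcal{B}$ to second order around $u=v=1$ and computing the iterated residues directly.
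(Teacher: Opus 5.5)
\textbf{There is a genuine gap: the non-square terms.} The paper does not prove this theorem. It quotes it from Djankovi\'c--{\DJ}oki\'c and reuses only their exact identity, \thref{AFELambda}, so your argument has to stand on its own. Your exact formula $\Lambda''(\tfrac12,\chi_D)/(\log q)^2=2\sum_{n<g}(g-n)^2q^{-n/2}a_n(D)$ is correct, and so is your treatment of the terms with $f_1f_2=\square$. The estimate for the non-square terms is wrong.

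Since $\mathcal{M}_n$ has $q^n$ elements, $\sum_{\deg f_1,\deg f_2\le g}|f_1f_2|^{-1/2+\epsilon}\asymp q^{g(1+2\epsilon)}$, not $q^{O(\epsilon g)}$. Your bound therefore gives $q^{2g(1+\epsilon)}$. After dividing by $|\mathcal{H}_{2g+1}|\asymp q^{2g+1}$ this is $q^{2\epsilon g}$, which is larger than the main term.

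No sharper pointwise bound can repair this. After the sieve $D=C^2h$, the sum $\sum_{h\in\mathcal{M}_{2g+1-2\deg C}}\chi_f(h)$ vanishes unless $\deg h<\deg f$. The remaining short sums are square-root sized, and $\sum_D\chi_D(f)$ has size about $q^{g}$ for many non-square $f$ of degree near $2g$. Meanwhile the weights already satisfy $\sum_{\deg f_1<g,\,\deg f_2\le g}|f_1f_2|^{-1/2}\asymp q^{g}$. So any term-by-term estimate loses something of size $\asymp q^{2g}$, the same order as $|\mathcal{H}_{2g+1}|$. That is far from $O(q^{g(\epsilon-1)})$, and it does not even determine the lower coefficients of $P$. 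Compare the prime case in this paper: there the Weil bound is used only for $\deg f\le 2X-1$ with $X=g-[100\log g]$, and the range up to $2g-1$ needs the contour argument of Section 3.

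\textbf{What is needed.} You need Florea's treatment of the off-diagonal, which Djankovi\'c--{\DJ}oki\'c follow.
\begin{itemize}
\item Write $\sum_{D\in\mathcal{H}_{2g+1}}\chi_D(f)=\sum_{(C,f)=1}\mu(C)\sum_{h\in\mathcal{M}_{2g+1-2\deg C}}\chi_f(h)$ and split at $\deg C=A$.
\item For $\deg C\le A$, apply Poisson summation in $h$. This produces Gauss sums $G(V,\chi_f)$ with $\deg V\le\deg f-(2g+1-2\deg C)-1$. Then sum over $f$ \emph{first}, via generating functions.
\item The terms with $V\neq\square$ can then be bounded using Lindel\"of-type bounds for $\mathcal{L}(u,\chi_V)$.
\item The terms with $V=\square$ are different: there $G(V,\chi_f)$ does not oscillate in $f$, so they do not cancel and must be evaluated explicitly.
\item For $\deg C>A$, the $f$-sum is long compared with $\deg h$, and one bounds averages of $L$-functions of small conductor.
\end{itemize}
Only after all this does the $O(q^{g(1+\epsilon)})$ error follow.

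\textbf{A smaller point in the diagonal.} The factors $(1-u^2)^{-1}(1-v^2)^{-1}$ in your generating function also have poles at $u=-1$ and $v=-1$, on the same circle as $u=v=1$. Their residues are polynomials in $g$ times $(-1)^g$; they encode the parity constraint $\deg f_1\equiv\deg f_2\pmod 2$. They must be kept and cannot be absorbed into an $O(q^{-g/2+\epsilon g})$ error.
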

	Similarly in function fields, a problem is to understand the asymptotic behaviour of 
	\[
	\frac{1}{|\mathcal{P}_{2g+1}|}\sum_{P\in\mathcal{P}_{2g+1}}L\left(\frac{1}{2},\chi_P\right)^k,
	\]
	when summing over all monic irreducible (prime) polynomials of degree $2g+1$ in $\mathbb{F}_q[t]$. An asymptotic formula for the first and second moments were established by Andrade and Keating \cite{AndradeKeating2013}. In particular, for the second moment, they showed that
	\[
	\frac{1}{|\mathcal{P}_{2g+1}|}\sum_{P\in\mathcal{P}_{2g+1}}L\left(\frac{1}{2},\chi_P\right)^2=\frac{g^3}{3\zeta_{\mathbb{A}}(2)}+O(g^2).
	\]
	This asymptotic formula was later improved by Bui and Florea \cite{BuiFlorea2020} who showed that 
	\begin{equation}\label{BuiFloreasecondmoment}
		\frac{1}{|\mathcal{P}_{2g+1}|}\sum_{P\in\mathcal{P}_{2g+1}}L\left(\frac{1}{2},\chi_P\right)^2=\frac{g^3}{3\zeta_{\mathbb{A}}(2)}+g^2\left(\frac{3}{2}+\frac{1}{2q}\right)+O_{\epsilon}(g^{\frac{3}{2}+\epsilon}). 
	\end{equation}
	Similar to the heuristic developed by Conrey et al. \cite{CFKRS2005}, Andrade, Jung and Shamesaldeen \cite{AndradeJungShamesaldeen2021} conjectured asymptotic formulas for the integral moments of $L(\frac{1}{2},\chi_P)$. Similarly, the moments of derivatives of these families of Dirichlet L-functions is an important topic to study. An asymptotic formula for the $\mu^{\text{th}}$ derivative of $L(\frac{1}{2},\chi_P)$ was established in \cite{Jung2022}. In a recent paper, Best \cite{Best2026} considered the mixed second moment of the product of the $\mu^{\text{th}}$ and $\nu^{\text{th}}$ derivatives of $L(\frac{1}{2},\chi_P)$. In particular for $\mu=2$ and $\nu=0$, they proved the following result. 
	\begin{theorem}[Best]\thlabel{Best2026}
		Let $q$ be an odd prime, then as $g\rightarrow\infty $ we have
		\[
		\frac{1}{|\mathcal{P}_{2g+1}|}\sum_{P\in\mathcal{P}_{2g+1}}\frac{L^{''}(\frac{1}{2},\chi_P)L(\frac{1}{2},\chi_P)}{(\log q)^2}=\frac{2g^5}{5\zeta_{\mathbb{A}}(2)}+O(g^4).
		\]
	\end{theorem}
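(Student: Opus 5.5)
The natural route to the theorem of Best is to differentiate a shifted second moment. We write $L(s,\chi_P)=\mathcal{L}(u,\chi_P)$ with $u=q^{-s}$, a polynomial of degree $2g$ in $u$, and use the functional equation $\mathcal{L}(u,\chi_P)=(qu^2)^{g}\mathcal{L}(1/(qu),\chi_P)$. This gives an approximate functional equation for $L(\tfrac12+\alpha,\chi_P)$ with two pieces:
\[
L(\tfrac12+\alpha,\chi_P)=\sum_{\deg f\le g}\frac{\chi_P(f)}{|f|^{1/2+\alpha}}+q^{-2g\alpha}\sum_{\deg f\le g-1}\frac{\chi_P(f)}{|f|^{1/2-\alpha}}.
\]
We then study the shifted moment
\[
M(\alpha,\beta)=\frac{1}{|\mathcal{P}_{2g+1}|}\sum_{P\in\mathcal{P}_{2g+1}}L(\tfrac12+\alpha,\chi_P)L(\tfrac12+\beta,\chi_P)
\]
uniformly for $|\alpha|,|\beta|\ll 1/g$, or more simply as a polynomial identity in $q^{-\alpha},q^{-\beta}$. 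The target quantity is $\partial_\alpha^2 M(\alpha,\beta)|_{\alpha=\beta=0}/(\log q)^2$.

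For the main term, we multiply out the product to get four double sums over $f_1,f_2$. By quadratic reciprocity in $\mathbb{F}_q[t]$, we have $\chi_P(f)=\chi_f(P)$ up to a sign that is trivial since $\deg P$ is odd and we may take $q$ prime (for $q\equiv 3 \pmod 4$ we track the sign). The average over primes $P$ of $\chi_{f_1f_2}(P)$ is then $1+O(\cdot)$ when $f_1f_2$ is a square, minus the contribution of $P\mid f_1f_2$, which is negligible. When $f_1f_2$ is not a square, the Riemann hypothesis for $L(s,\chi_{f_1f_2})$ gives the bound $\ll \deg(f_1f_2)\,q^{(2g+1)/2}/|\mathcal{P}_{2g+1}|$.

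The diagonal $f_1f_2=\square$ gives sums of the form
\[
\sum_{\deg f_1,\deg f_2\le g,\ f_1f_2=\square}|f_1|^{-1/2-\alpha}|f_2|^{-1/2-\beta}.
\]
We evaluate these by writing the generating series as $\zeta_{\mathbb{A}}$-quotients and applying Perron's formula in $u$ (contour integrals of $\mathcal{Z}(u)$-type functions with poles at $u=q^{-1}$ shifted by $\alpha,\beta$). Each of the four terms yields a polynomial in $g$ times powers $q^{-2g\alpha}$, $q^{-2g\beta}$. Differentiating twice in $\alpha$ and setting $\alpha=\beta=0$, the leading behaviour comes from the residue at the triple-order pole, and the four pieces combine. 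Equivalently, one can compute directly with $L''(\tfrac12,\chi_P)=(\log q)^2\sum\deg(f)^2\chi_P(f)|f|^{-1/2}$-type expressions, where the second piece contributes derivatives of $q^{-2g\alpha}|f|^{\alpha}$, i.e. factors $(2g-\deg f)^2$. The leading-order computation then reduces to elementary lattice-point sums: with $\deg f_1=a$, $\deg f_2=b$, $a+b$ even, and density $1/\zeta_{\mathbb{A}}(2)$ from the square condition, we get integrals like $\int\!\!\int_{0\le x,y\le 1} x^2\,dx\,dy$ scaled by $g^5$ together with the corresponding dual terms. Summing them should give the coefficient $2/5$, consistent with $\tfrac13$ for the undifferentiated case from (\ref{BuiFloreasecondmoment}).

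The main obstacle is the non-square (off-diagonal) contribution. With two sums of length $g$, the trivial GRH bound gives $\sum_{\deg f_1f_2\le 2g}|f_1f_2|^{-1/2}\cdot g\,q^{g}/q^{2g}\approx g^{c}$, which is of size polynomial in $g$, not negligible a priori. We would handle it as Andrade--Keating do: convert the sum over primes to a sum over all monic polynomials via the prime polynomial theorem and the explicit formula, and bound $\sum_{\deg P=2g+1}\chi_{h}(P)\ll \deg h\, q^{g}/g$. Summing $|f_1f_2|^{-1/2}\deg(f_1f_2)$ with weights $\deg^2$ over $\deg f_i\le g$ gives $\ll g^4$, which is absorbed into the stated $O(g^4)$ error. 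Diagonal lower-order terms (boundary effects from $a+b$ parity, the $P\mid f_1f_2$ correction, and the secondary poles) are likewise $O(g^4)$, so only the leading $g^5$ coefficient requires exact computation.
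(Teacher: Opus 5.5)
The paper does not prove this statement itself. It cites Best, and then recovers and sharpens it from its main theorem via $L''(\tfrac12,\chi_P)=\Lambda''(\tfrac12,\chi_P)+g^2(\log q)^2L(\tfrac12,\chi_P)$, which follows from $\Lambda'(\tfrac12,\chi_P)=0$. This gives the leading coefficient $\tfrac1{15}+\tfrac13=\tfrac25$, with everything reduced to one-variable sums weighted by $\tau(f)$.

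Your route is a legitimate alternative: the exact two-piece approximate functional equation, differentiated termwise and multiplied out into four double sums over $\deg f_1,\deg f_2\le g$. The part you call the main obstacle is in fact harmless. The Weil-bound lemma gives, for each non-square pair, an average $\ll q^{-g}(a+b)$, where $a=\deg f_1$ and $b=\deg f_2$. So the off-diagonal is $\ll g^2\sum_{a,b\le g}(a+b)q^{(a+b)/2-g}\ll g^3$ at once. There is also no $P\mid f_1f_2$ correction at all, since $\deg f_1f_2\le 2g<\deg P$.

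The genuine gap is the diagonal, which is the only place the constant $\tfrac25$ comes from, and the mechanism you describe for it is wrong. The square condition does not give a constant density $1/\zeta_{\mathbb{A}}(2)$ per pair of degrees. The generating series $\sum_{f_1f_2=\square}x^{\deg f_1}y^{\deg f_2}=\mathcal{Z}(x^2)\mathcal{Z}(y^2)\mathcal{Z}(xy)/\mathcal{Z}(x^2y^2)$, evaluated at $x=X/\sqrt q$, $y=Y/\sqrt q$, equals $(1-X^2Y^2/q)/\bigl((1-X^2)(1-Y^2)(1-XY)\bigr)$. Reading off coefficients gives exactly
\[
\sum_{\substack{\deg f_1=a,\ \deg f_2=b\\ f_1f_2=\square}}\frac{1}{\sqrt{|f_1f_2|}}=\Big(1-\frac1q\Big)\Big\lfloor\frac{\min(a,b)}{2}\Big\rfloor+1\qquad(a\equiv b\pmod 2),
\]
and $0$ otherwise. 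The linear growth comes from the factor $\mathcal{Z}(xy)$, that is, from the many ways of splitting $h^2$ as $f_1f_2$ with prescribed degrees.

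With a constant density, your integrals $\iint x^2\,dx\,dy$ are homogeneous of degree $2$ and produce a term of order $g^4$, not $g^5$. The same recipe gives order $g^2$ rather than $g^3/(3\zeta_{\mathbb{A}}(2))$ for the undifferentiated moment. So your consistency check cannot hold, and ``should give $\tfrac25$'' is unsupported.

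Inserting the correct weight, the four pieces carry weights $a^2,a^2,(2g-a)^2,(2g-a)^2$ on $\deg f_1=a$, and give
\begin{align*}
&\frac14\Big(1-\frac1q\Big)g^5\Big(2\iint_{[0,1]^2}x^2\min(x,y)\,dx\,dy+2\iint_{[0,1]^2}(2-x)^2\min(x,y)\,dx\,dy\Big)\\
&\qquad=\frac14\Big(1-\frac1q\Big)g^5\Big(\frac{3}{10}+\frac{13}{10}\Big)=\frac{2g^5}{5\zeta_{\mathbb{A}}(2)}.
\end{align*}
The parity restriction, the Riemann-sum approximation and the $+1$ each contribute $O(g^4)$. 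The same computation with weight $1$ gives $4\cdot\tfrac14\iint\min(x,y)=\tfrac13$, which is the correct check.

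With this step actually carried out, your argument proves the statement, with room to spare since the off-diagonal is only $O(g^3)$. Without it, the proposal has not established the leading coefficient.
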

	In this paper, we use the methods of Bui and Florea \cite{BuiFlorea2020} to establish an asymptotic formula for the mixed second moment involving the second derivative of L-functions for this family. We define the completed L-function as 
    \[
    \Lambda(s,\chi_P):=q^{g(s-\frac{1}{2})}L(s,\chi_P),
    \]
    which satisfies the symmetric function equation
    \[
    \Lambda(s,\chi_P)=\Lambda(1-s,\chi_P),
    \]
    and $\Lambda'(\frac{1}{2},\chi_P)=0$. The main result for this paper is the following, which can be seen as the monic irreducible version of \thref{DD2021}.
	\begin{theorem}\thlabel{Main}
		Let $q$ be an odd prime then as $g\rightarrow\infty$ we have  
		\[
		\frac{1}{|\mathcal{P}_{2g+1}|}  \sum_{P\in\mathcal{P}_{2g+1}}\frac{\Lambda^{''}(\frac{1}{2},\chi_P)\Lambda(\frac{1}{2},\chi_P)}{(\log q)^2}=\frac{g^5}{15\zeta_{\mathbb{A}}(2)}+g^4\left(\frac{1}{2}+\frac{1}{6q}\right)+\frac{4g^3}{3}+g^2\left(\frac{3}{2}-\frac{1}{6q}\right)+O_{\epsilon}(g^{\frac{3}{2}+\epsilon}).
		\]
	\end{theorem}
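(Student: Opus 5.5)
Following Bui and Florea, the plan is to express $\Lambda''\Lambda$ at $s=\tfrac12$ as a differential operator applied to a two-variable shifted moment and reduce to their analysis. Write $u=q^{-s}$. Since $\chi_P$ is non-principal of conductor degree $2g+1$, $L(s,\chi_P)$ is a polynomial in $u$ of degree $2g$, so $\Lambda(s,\chi_P)=q^{g(s-\frac12)}\sum_{n\le 2g}A_P(n)q^{-ns}$ with $A_P(n)=\sum_{f\in\mathcal M_n}\chi_P(f)$. Differentiating twice in $s$ and setting $s=\tfrac12$ gives
\[
\frac{\Lambda''(\tfrac12,\chi_P)}{(\log q)^2}=\sum_{n\le 2g}(g-n)^2\,q^{-n/2}A_P(n).
\]
Hence the moment equals
\[
\frac{1}{|\mathcal P_{2g+1}|}\sum_P\sum_{f_1,f_2}\frac{(g-d(f_1))^2\chi_P(f_1f_2)}{|f_1f_2|^{1/2}},
\]
with $d(f_i)\le 2g$. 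Equivalently, it is $\partial_\alpha^2 \big(M(\alpha,\beta)\big)\big|_{\alpha=\beta=0}/(\log q)^2$, where $M(\alpha,\beta)$ is the average of $\Lambda(\tfrac12+\alpha)\Lambda(\tfrac12+\beta)$.

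Next I apply the approximate functional equation in the form Bui and Florea use. I write $L(\tfrac12,\chi_P)$ as a sum over $d(f)\le g$ plus the dual sum, using $\Lambda(s)=\Lambda(1-s)$. For the product I take the shifted version $L(\tfrac12+\alpha)L(\tfrac12+\beta)$ and split as Bui and Florea do: one sum over $d(f_1f_2)\le 2g$ with a dual term of the same shape with $\alpha,\beta\mapsto-\alpha,-\beta$ and weight $q^{-2g(\alpha+\beta)}$, or at the symmetric split $[g]$ and $[2g]$ pieces. Multiplying by $q^{g(\alpha+\beta)}$ makes the completed version symmetric. Then I split $f_1f_2=\square$ versus non-square.

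\textbf{Square terms.} The square terms are evaluated by contour integration. Using $\frac{1}{|\mathcal P_{2g+1}|}\sum_P\chi_P(\ell^2)=1+O(q^{-2g}\cdot)$ (only $P\nmid\ell$ matters), the diagonal becomes
\[
\frac{1}{2\pi i}\oint \frac{\mathcal Z(\alpha,\beta,u)}{u^{2g+1}(1-u)}\,du,
\]
where the generating series is $\zeta_{\mathbb A}$-type factors $\frac{\zeta(1+2\alpha)\zeta(1+\alpha+\beta)\zeta(1+2\beta)}{\zeta(2+\cdots)}$. I compute this as the residue at $u=1$ together with the pole structure, exactly as in Bui and Florea's derivation of \eqref{BuiFloreasecondmoment}. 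Doing this with shifts retained, then applying $\partial_\alpha^2$ at $0$, yields a polynomial in $g$. As a consistency check, its leading term should be $g^5/(15\zeta_{\mathbb A}(2))$: this equals $\int$ of $(g-x)^2$ against the leading density, and in the $D$-family it matches the ratio to $L^2$ given by $\int_0^1(1-2t)^2\cdots$. The lower-order coefficients $\tfrac12+\tfrac1{6q}$, $\tfrac43$, and $\tfrac32-\tfrac1{6q}$ come from the exact residue computation, including the $1/q$ corrections arising from $\zeta_{\mathbb A}(2)$ expansions, as in Bui and Florea.

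\textbf{Non-square terms.} For non-square $f_1f_2$ I use quadratic reciprocity to rewrite $\sum_P\chi_P(f)=\sum_P\chi_f(P)$. I then expand via the explicit formula and the Riemann hypothesis for $L(u,\chi_f)$, giving contributions of size $q^{g/2}\cdot g^{O(1)}$ per term. Summed naively, this gives only $O(g^{C})$ savings relative to the main term. This is the main obstacle, since Bui and Florea needed a delicate treatment (the $g^{3/2+\epsilon}$ error) that uses a bound for a sum of $L(\tfrac12,\chi_f)$ twisted moments, a second-moment type bound via Lindelöf/Soundararajan-type upper bounds for $\sum_f|L(\tfrac12+it,\chi_f)|^2$. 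The weight $(g-d(f_1))^2$ inflates the non-square contribution by at most $g^2$, but since it is a smooth polynomial weight I expect it to be absorbed by expressing it as $\partial_\alpha^2$ of the shifted sum and applying their bound uniformly for $|\alpha|\ll 1/g$ via Cauchy's integral formula on small circles. This keeps the error at $O_\epsilon(g^{3/2+\epsilon})$. I would try exactly this derivative-via-Cauchy step first. If the uniformity in shifts fails, the fallback is to redo their upper bound with the polynomial weight inserted directly.
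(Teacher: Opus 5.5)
\textbf{There is a genuine gap in the control of the non-square terms.} Your Cauchy-formula step does not keep the error at $O_\epsilon(g^{3/2+\epsilon})$. Suppose $M(\alpha,\beta)=\mathcal{M}(\alpha,\beta)+E(\alpha,\beta)$ with $E\ll_\epsilon g^{3/2+\epsilon}$ uniformly for $|\alpha|\le r$. Then
\[
\partial_\alpha^2E(0,0)=\frac{2}{2\pi i}\oint_{|\alpha|=r}\frac{E(\alpha,0)}{\alpha^{3}}\,d\alpha
\]
only gives $|\partial_\alpha^2E(0,0)|\le 2r^{-2}\sup_{|\alpha|=r}|E(\alpha,0)|$. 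Moreover, $r$ cannot exceed about $1/(g\log q)$, because $\Lambda(\frac12+\alpha,\chi_P)=\sum_n A_P(n)q^{-n/2}q^{(g-n)\alpha}$ contains factors of size up to $q^{g|\alpha|}$. After dividing by $(\log q)^2$ you get $O_\epsilon(g^{7/2+\epsilon})$. That captures the $g^5$ and $g^4$ terms but swamps $\frac43g^3$ and the $g^2$ term.

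Your fallback, inserting the weight directly, fails for the same reason. After folding with the functional equation, your weight $(g-\deg f_1)^2$ is as large as $g^2$ on pairs with $\deg(f_1f_2)$ near $2g$. That is exactly where the Weil bound stops helping: each such degree contributes $\ll g^2q^{-(2g-\deg(f_1f_2))/2}$. It is also where one must integrate an upper bound for $|\mathcal{L}(u/\sqrt q,\chi_P)|^2$ over $|u|=1$. So any upper-bound argument loses $g^2$; even the trivial Weil estimate of these terms is $O(g^4)$ in your formulation. The true off-diagonal contribution is small only because of an exact cancellation that upper bounds cannot detect.

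\textbf{The missing idea is $\Lambda'(\frac12,\chi_P)=0$.} It gives $\Lambda''\Lambda=\frac12(\Lambda^2)''$ at $s=\frac12$. Writing $\mathcal{L}(u/\sqrt q,\chi_P)^2=\sum_Nc_Nu^N$ with $c_N=c_{4g-N}$, you get the exact identity
\[
\frac{\Lambda''(\frac12,\chi_P)\Lambda(\frac12,\chi_P)}{(\log q)^2}=\sum_{f\in\mathbb{A}^+_{\leq 2g-1}}\frac{\tau(f)\chi_P(f)}{\sqrt{|f|}}\,(2g-\deg(f))^2 .
\]
In your notation this is $(g-d_1)^2+(g-d_2)^2=(2g-d_1-d_2)^2-2(g-d_1)(g-d_2)$, where the cross term sums to $\bigl(\sum_n(g-n)A_P(n)q^{-n/2}\bigr)^2=0$ for every $P$. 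The new weight vanishes to second order at $\deg f=2g$.

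The paper then splits at $\deg f=2X-1$ with $X=g-[100\log g]$.
\begin{itemize}
\item Below the split, the Weil bound makes the non-squares $O(q^{-(g-X)}g^{O(1)})$. The squares $4\sum_{\deg\ell\le X-1}\tau(\ell^2)|\ell|^{-1}(g-\deg\ell)^2$ are evaluated by residues using $(1-u^2/q)/(1-u)^3$.
\item Above the split, only $O(\log g)$ degrees remain and the weight is $O((\log g)^2)$. The paper uses Perron's formula on $|u|=1$, Bui--Florea's Corollary 4.3 (an upper bound for the prime-family average of $|\mathcal{L}(u/\sqrt q,\chi_P)|^2$) on arcs away from $u=\pm1$, and an explicit evaluation of the square terms on arcs of length $\asymp g^{-1/2}$ around $\pm1$. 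Together these give an error $O_\epsilon(g^{3/2+\epsilon}(\log g)^3)$.
\end{itemize}
Consistently with this, the four stated coefficients are exactly the top coefficients of the full square sum $4\sum_{\deg\ell\le g-1}\tau(\ell^2)|\ell|^{-1}(g-\deg\ell)^2$. So no off-diagonal main term appears, and the residue computation you asserted is this one.
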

	Using \thref{Main}, (\ref{BuiFloreasecondmoment}) and the fact that 
	\[
	\sum_{P\in\mathcal{P}_{2g+1}}\frac{L''\left(\frac{1}{2},\chi_P\right)L(\frac{1}{2},\chi_P)}{(\log q)^2}=\sum_{P\in\mathcal{P}_{2g+1}}\frac{\Lambda''(\frac{1}{2},\chi_P)\Lambda(\frac{1}{2},\chi_P)}{(\log q)^2}+g^2\sum_{P\in\mathcal{P}_{2g+1}}L\left(\frac{1}{2},\chi_P\right)^2,
	\]
	we can improve the asymptotic formula in \thref{Best2026}.
	\begin{corollary}
		Let $q$ be an odd prime then as $g\rightarrow\infty$ we have  
		\[
		\frac{1}{|\mathcal{P}_{2g+1}|}  \sum_{P\in\mathcal{P}_{2g+1}}\frac{L^{''}(\frac{1}{2},\chi_P)L(\frac{1}{2},\chi_P)}{(\log q)^2}=\frac{2g^5}{5\zeta_{\mathbb{A}}(2)}+g^4\left(2+\frac{2}{3q}\right)+O_{\epsilon}(g^{\frac{7}{2}+\epsilon}).
		\]
	\end{corollary}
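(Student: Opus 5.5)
The plan is to deduce the corollary directly from \thref{Main} and the second moment (\ref{BuiFloreasecondmoment}) of Bui and Florea, once the identity stated just before the corollary is justified. No new averaging over $P$ is needed; the work is a short differentiation followed by bookkeeping of the coefficients.

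\textbf{Step 1: the identity.} I would write $L(s,\chi_P)=q^{-g(s-\frac12)}\Lambda(s,\chi_P)$ and differentiate twice in $s$:
\[
L''(s,\chi_P)=q^{-g(s-\frac12)}\Bigl(\Lambda''(s,\chi_P)-2g\log q\,\Lambda'(s,\chi_P)+(g\log q)^2\Lambda(s,\chi_P)\Bigr).
\]
At $s=\frac12$ the prefactor equals $1$, and $\Lambda'(\frac12,\chi_P)=0$ because $\Lambda(s,\chi_P)=\Lambda(1-s,\chi_P)$. Also $\Lambda(\frac12,\chi_P)=L(\frac12,\chi_P)$. Hence
\[
L''(\tfrac12,\chi_P)L(\tfrac12,\chi_P)=\Lambda''(\tfrac12,\chi_P)\Lambda(\tfrac12,\chi_P)+g^2(\log q)^2L(\tfrac12,\chi_P)^2 .
\]
Dividing by $(\log q)^2$ and summing over $P\in\mathcal{P}_{2g+1}$ gives exactly the displayed identity.

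\textbf{Step 2: insert the asymptotics.} I would divide the identity by $|\mathcal{P}_{2g+1}|$.

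For the first sum I use \thref{Main}.

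For the second sum I multiply (\ref{BuiFloreasecondmoment}) by $g^2$, which gives
\[
\frac{g^5}{3\zeta_{\mathbb{A}}(2)}+g^4\Bigl(\frac32+\frac1{2q}\Bigr)+O_\epsilon(g^{\frac72+\epsilon}).
\]

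Adding the two expansions:
\begin{itemize}
\item The $g^5$ coefficient is $\frac{1}{15}+\frac13=\frac25$, multiplied by $\zeta_{\mathbb{A}}(2)^{-1}$.
\item The $g^4$ coefficient is $\frac12+\frac32=2$, plus $\frac{1}{6q}+\frac{1}{2q}=\frac{2}{3q}$.
\item The remaining terms of \thref{Main} are $\frac{4g^3}{3}$, $g^2(\frac32-\frac1{6q})$ and $O_\epsilon(g^{3/2+\epsilon})$. All of these are $O(g^{7/2+\epsilon})$, so they are absorbed into the error term coming from Bui--Florea.
\end{itemize}

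\textbf{Main obstacle.} The only point requiring care is the vanishing of $\Lambda'(\frac12,\chi_P)$. It comes from the symmetric functional equation: differentiating $\Lambda(s)=\Lambda(1-s)$ gives $\Lambda'(\frac12)=-\Lambda'(\frac12)$. The rest is arithmetic, and the error term is limited by the $g^{3/2+\epsilon}$ error in (\ref{BuiFloreasecondmoment}) once it is multiplied by $g^2$.
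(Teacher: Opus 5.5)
Your proposal is correct and is exactly the paper's argument. The paper deduces the corollary by adding \thref{Main} to $g^2$ times the Bui--Florea second moment (\ref{BuiFloreasecondmoment}), via the identity $L''(\frac12,\chi_P)=\Lambda''(\frac12,\chi_P)+(g\log q)^2L(\frac12,\chi_P)$, which you justify correctly from $\Lambda'(\frac12,\chi_P)=0$; your coefficient bookkeeping and the absorption of the lower-order terms into the $O_\epsilon(g^{7/2+\epsilon})$ error from Bui--Florea also agree with it.
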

	
	\section{Background and Preliminary Lemmas}
	In this section, we recall the necessary background and preliminary results needed. For a general reference see \cite{Rosen2002}.
	\subsection{Background}
	Let $\mathbb{F}_q$ denote a finite field with $q$ odd. We denote by $\mathbb{A}^+, \mathbb{A}^+_n$ and $\mathbb{A}^+_{\leq n}$ to be the set of all monic polynomials, the set of all monic polynomials of degree $n$ and the set of all monic polynomials of degree at most $n$ respectively, in $\mathbb{A}=\mathbb{F}_q[t]$. Furthermore, let $\mathcal{P}$ and $\mathcal{P}_n$ denote the set of all monic irreducible polynomials and the set of all monic irreducible polynomials of degree $n$ in $\mathbb{A}$. The Prime Polynomial Theorem states that
	\[
	|\mathcal{P}_n|=\frac{q^n}{n}+O\left(\frac{q^{\frac{n}{2}}}{n}\right).
	\]
	For $f\in\mathbb{A}$, we define the norm as $|f|=q^{\deg(f)}$. The zeta-function $\zeta_{\mathbb{A}}(s)$ for $\mathbb{A}$ is defined as
	\[
	\zeta_{\mathbb{A}}(s)=\sum_{f\in\mathbb{A}^+}\frac{1}{|f|^s}=\prod_{Q\in\mathcal{P}}\left(1-\frac{1}{|Q|^s}\right)^{-1},
	\]
	for $\Re(s)>1$. Since $|\mathbb{A}^+_n|=q^n$, then 
	\[
	\zeta_{\mathbb{A}}(s)=\frac{1}{1-q^{1-s}},
	\]
	which provides a meromorphic continuation of $\zeta_{\mathbb{A}}$ with a simple pole at $s=1$. For $P\in\mathcal{P}_{2g+1}$, we define the quadratic character using the Legendre symbol $\chi_P(f)=\left(\frac{f}{P}\right)$. Then the quadratic Dirichlet L-function associated with the quadratic character $\chi_P$ is 
	\[
	L(s,\chi_P)=\sum_{f\in\mathbb{A}^+}\frac{\chi_P(f)}{|f|^s}=\prod_{Q\in\mathcal{P}}\left(1-\frac{\chi_P(Q)}{|Q|^s}\right)^{-1}.
	\]
	We will often make use of the change of variables $u=q^{-s}$, then the zeta-function is 
	\[
	\mathcal{Z}(u)=\sum_{f\in\mathbb{A}^+}u^{\deg(f)}=\prod_{Q\in\mathcal{P}}\left(1-u^{\deg(Q)}\right)^{-1}=\frac{1}{1-qu},
	\]
	with a simple pole at $u=\frac{1}{q}$. Furthermore, the Dirichlet L-function associated with $\chi_P$ is
	\[
	\mathcal{L}(u,\chi_P)=\sum_{f\in\mathbb{A}^+}\chi_P(f)u^{\deg(f)}=\prod_{Q\in\mathcal{P}}\left(1-\chi_P(Q)u^{\deg(Q)}\right)^{-1},
	\]
	where $\mathcal{L}(u,\chi_P)$ is a polynomial of degree $2g$ and satisfies the functional equation
	\[
	\mathcal{L}(u,\chi_P)=(qu^2)^g\mathcal{L}\left(\frac{1}{qu},\chi_P\right). 
	\]
	By Weil \cite{Weil1948}, all the zeros of $\mathcal{L}(u,\chi_P)$ lie on the circle $|u|=q^{-\frac{1}{2}}$. 
	\subsection{Preliminary Lemmas}
	Here we state the results needed to prove \thref{Main}. Firstly, we have the following approximate functional equations.
	\begin{lemma}\thlabel{AFELambda}
		For $P\in\mathcal{P}_{2g+1}$, we have 
		\[
		\frac{\Lambda^{''}\left(\frac{1}{2},\chi_P\right)\Lambda\left(\frac{1}{2},\chi_P\right)}{(\log q)^2}=\sum_{f\in\mathbb{A}^+_{\leq 2g-1}}\frac{\tau(f)\chi_P(f)}{\sqrt{|f|}}(2g-\deg(f))^2.
		\]
	\end{lemma}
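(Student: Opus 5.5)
The plan is to write both completed $L$-values as finite sums, use the functional equation to make their coefficients palindromic, and then symmetrise the product so that only the weight $(2g-\deg f)^2$ survives.

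\emph{Setting up the coefficients.} For $0\le n\le 2g$ put $a_n=\sum_{f\in\mathbb{A}^+_n}\chi_P(f)$, so that $\mathcal{L}(u,\chi_P)=\sum_{n=0}^{2g}a_nu^n$. We have $a_n=0$ for $n>2g$ because $\mathcal{L}(u,\chi_P)$ is a polynomial of degree $2g$. Writing $u=q^{-s}$ gives
\[
\Lambda(s,\chi_P)=\sum_{n=0}^{2g}a_nq^{-ns}q^{g(s-\frac12)}=\sum_{n=0}^{2g}b_n\,q^{(g-n)(s-\frac12)},\qquad b_n:=a_nq^{-n/2}.
\]
Differentiating at $s=\tfrac12$ then yields
\[
\Lambda(\tfrac12,\chi_P)=\sum_n b_n,\quad \frac{\Lambda'(\tfrac12,\chi_P)}{\log q}=\sum_n b_n(g-n),\quad \frac{\Lambda''(\tfrac12,\chi_P)}{(\log q)^2}=\sum_n b_n(g-n)^2.
\]

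\emph{Symmetry of the coefficients.} Substitute into $\mathcal{L}(u,\chi_P)=(qu^2)^g\mathcal{L}(1/(qu),\chi_P)$ and compare coefficients. This gives $a_{2g-n}=q^{g-n}a_n$, that is, $b_{2g-n}=b_n$. Two consequences follow.

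1. Pairing $n$ with $2g-n$ shows $\sum_n b_n(g-n)=0$. This recovers $\Lambda'(\tfrac12,\chi_P)=0$.
2. Set $c_k=\sum_{m+n=k}b_mb_n$. Then $c_{4g-k}=c_k$.

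Since $\tau(f)=\sum_{f=f_1f_2}1$ over monic factorisations and $\chi_P$ is multiplicative,
\[
c_k=\sum_{f\in\mathbb{A}^+_k}\frac{\tau(f)\chi_P(f)}{\sqrt{|f|}}.
\]
Moreover $c_k=0$ for $k>4g$.

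\emph{The symmetrisation trick.} Write the product as
\[
\sum_{m,n}b_mb_n(g-m)^2=\frac12\sum_{m,n}b_mb_n\big[(g-m)^2+(g-n)^2\big].
\]
Add the vanishing quantity $\sum_{m,n}b_mb_n(g-m)(g-n)=\big(\sum_m b_m(g-m)\big)^2=0$ to complete the square. This gives
\[
\frac{\Lambda''\Lambda}{(\log q)^2}=\frac12\sum_{m,n}b_mb_n(2g-m-n)^2=\frac12\sum_{k=0}^{4g}c_k(2g-k)^2 .
\]
The term $k=2g$ vanishes. By $c_{4g-k}=c_k$, the terms with $2g<k\le 4g$ reproduce exactly the terms with $0\le k<2g$. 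Hence the total equals $\sum_{k=0}^{2g-1}c_k(2g-k)^2$, which is the stated sum over $f\in\mathbb{A}^+_{\le 2g-1}$.

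\emph{Main point.} There is no real analytic obstacle. The only step needing care is the coefficient comparison in the functional equation, which gives $b_{2g-n}=b_n$. Once that is in hand, the key idea is to use $\Lambda'(\tfrac12,\chi_P)=0$ to replace $(g-m)^2$ by $\tfrac12(2g-m-n)^2$, so that the weight depends only on $\deg f=m+n$.
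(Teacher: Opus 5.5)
Your proof is correct and complete, and it is essentially the route the paper intends via its reference to Djankovi\'c--\DJ oki\'c, Lemma 2.2. Completing the square with $\bigl(\sum_m b_m(g-m)\bigr)^2=0$ is the identity $\Lambda''\Lambda=\tfrac12(\Lambda^2)''$ at $s=\tfrac12$. The palindromic relation $c_{4g-k}=c_k$ is exactly the approximate functional equation for $\mathcal{L}(u/\sqrt{q},\chi_P)^2$, which you rederive from the coefficient form of the functional equation rather than citing it.
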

	\begin{proof}
		The proof is similar to that given in \cite[Lemma~2.2]{DjankovicDjokic2021}.
	\end{proof}
	\begin{lemma}\thlabel{AFE}
		For $P\in\mathcal{P}_{2g+1}$, we have
		\[
		\mathcal{L}\left(\frac{u}{\sqrt{q}},\chi_P\right)^2=\sum_{f\in\mathbb{A}^+_{\leq 2g}}\frac{\tau(f)\chi_P(f)u^{\deg(f)}}{\sqrt{|f|}}+u^{4g}\sum_{f\in\mathbb{A}^+_{\leq 2g-1}}\frac{\tau(f)\chi_P(f)}{\sqrt{|f|}u^{\deg(f)}},
		\]
		where $\tau(f)=\sum_{f_1f_2=f}1$ is the divisor function. 
	\end{lemma}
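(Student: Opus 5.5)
The plan is to work entirely with the polynomial $\mathcal{L}(u,\chi_P)$ and its functional equation; no analytic input is needed. Since $\mathcal{L}(u,\chi_P)=\sum_{f\in\mathbb{A}^+}\chi_P(f)u^{\deg(f)}$ is a polynomial of degree $2g$, its square is a polynomial of degree $4g$.

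First I will identify the coefficients of the square. Multiplying the two generating series formally and grouping products $f_1f_2=f$ gives
\[
\mathcal{L}(u,\chi_P)^2=\sum_{f\in\mathbb{A}^+}\tau(f)\chi_P(f)u^{\deg(f)},
\]
using complete multiplicativity of $\chi_P$. Because the left side is a polynomial of degree $4g$, the inner sums $\sum_{f\in\mathbb{A}^+_n}\tau(f)\chi_P(f)$ vanish for $n>4g$. This is an identity of power series, so no convergence issue arises. Substituting $u\mapsto u/\sqrt{q}$ and using $|f|=q^{\deg(f)}$ gives
\[
F(u):=\mathcal{L}\left(\frac{u}{\sqrt{q}},\chi_P\right)^2=\sum_{n=0}^{4g}a_nu^n,\qquad a_n=\sum_{f\in\mathbb{A}^+_n}\frac{\tau(f)\chi_P(f)}{\sqrt{|f|}}.
\]

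Next I will symmetrize this with the functional equation. Putting $v=u/\sqrt{q}$ in $\mathcal{L}(v,\chi_P)=(qv^2)^g\mathcal{L}(1/(qv),\chi_P)$, we have $qv^2=u^2$ and $1/(qv)=u^{-1}/\sqrt{q}$, so $\mathcal{L}(u/\sqrt{q},\chi_P)=u^{2g}\mathcal{L}(u^{-1}/\sqrt{q},\chi_P)$. Squaring gives $F(u)=u^{4g}F(1/u)$, and comparing coefficients yields $a_n=a_{4g-n}$ for $0\le n\le 4g$.

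Finally I will split the sum at the midpoint. The terms with $0\le n\le 2g$ give exactly the first sum in the statement. For the terms with $2g+1\le n\le 4g$, I replace $a_n$ by $a_{4g-n}$ and set $m=4g-n$, which runs over $0\le m\le 2g-1$. These terms then become
\[
\sum_{m=0}^{2g-1}a_mu^{4g-m}=u^{4g}\sum_{f\in\mathbb{A}^+_{\leq 2g-1}}\frac{\tau(f)\chi_P(f)}{\sqrt{|f|}\,u^{\deg(f)}},
\]
which is the second sum in the statement.

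There is no real obstacle here. The only points needing care are justifying the coefficient identity for $\mathcal{L}^2$ as a formal identity truncated at degree $4g$, and checking the normalisation in the functional equation after the substitution $u\mapsto u/\sqrt{q}$.
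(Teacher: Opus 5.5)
Your proof is correct and follows essentially the same route as the derivation the paper cites (Andrade--Keating, equation (4.4)): write $\mathcal{L}(u/\sqrt{q},\chi_P)^2=\sum_{n=0}^{4g}a_nu^n$, use the functional equation to get $a_n=a_{4g-n}$, and fold the coefficients with $n>2g$ onto those with $n\le 2g-1$. The paper gives no independent argument beyond that citation, so your write-up simply fills in the details of the standard proof.
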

	\begin{proof}
		See \cite{AndradeKeating2013}, equation (4.4).    
	\end{proof}
	Next, we need the following Weil bound on character sum over monic irreducible polynomials. 
	\begin{lemma}\thlabel{Weilbound}
		For $f\in\mathbb{A}^+$ not a square, we have 
		\[
		\frac{1}{|\mathcal{P}_{2g+1}|}\sum_{P\in\mathcal{P}_{2g+1}}\chi_P(f)\ll q^{-g}\deg(f).
		\]
	\end{lemma}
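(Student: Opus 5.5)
The plan is to reduce the sum over $P$ to a sum of a fixed non-principal character evaluated at primes, and then apply the Riemann Hypothesis for curves (Weil). Write $n=2g+1$. For monic $P$ and $f$, quadratic reciprocity in $\mathbb{F}_q[t]$ gives
\[
\chi_P(f)=\left(\frac{f}{P}\right)=(-1)^{\frac{q-1}{2}\deg(f)\deg(P)}\left(\frac{P}{f}\right).
\]
Since $\deg(P)=n$ is fixed, the sign is a constant $\pm1$ depending only on $f$. So it suffices to bound $\sum_{P\in\mathcal{P}_n}\chi(P)$, where $\chi(h)=\left(\frac{h}{f}\right)$ is the Jacobi symbol modulo $f$.

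Write $f=f_1f_2^2$ with $f_1$ square-free. Because $f$ is not a square, $f_1$ is a nonconstant monic polynomial (or, if $f_1$ is a constant, then $f$ would be a square up to a unit, which is excluded since $f$ is monic). Then $\chi$ agrees with the primitive non-principal quadratic character $\chi_{f_1}$ at every prime $P\nmid f_2$. The number of primes of degree $n$ dividing $f$ is at most $\deg(f)/n$. Hence replacing $\chi$ by $\chi_{f_1}$ costs $O(\deg(f))$ in the sum over $P$, which is negligible after dividing by $|\mathcal{P}_n|\asymp q^n/n$.

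For the non-principal $\chi_{f_1}$, the $L$-function $\mathcal{L}(u,\chi_{f_1})$ is a polynomial of degree at most $\deg(f_1)-1\le\deg(f)$. By Weil its inverse zeros $\alpha_j$ satisfy $|\alpha_j|\le\sqrt q$ (some may have modulus $1$ for the trivial zero in the even-degree case). Taking the logarithmic derivative of the Euler product gives the explicit formula
\[
\sum_{h\in\mathbb{A}^+_n}\Lambda(h)\chi_{f_1}(h)=-\sum_j\alpha_j^n,
\qquad\text{so}\qquad
\Big|\sum_{h\in\mathbb{A}^+_n}\Lambda(h)\chi_{f_1}(h)\Big|\le\deg(f)\,q^{n/2}.
\]
The left side equals $n\sum_{P\in\mathcal{P}_n}\chi_{f_1}(P)$ plus the contribution of proper prime powers $Q^k$ with $k\ge2$ and $k\deg Q=n$. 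That contribution is at most $\sum_{d\mid n,\,d\le n/2}d\,|\mathcal{P}_d|\ll q^{n/2}$. Therefore
\[
\sum_{P\in\mathcal{P}_n}\chi_P(f)\ll\frac{\deg(f)\,q^{n/2}}{n}+\deg(f).
\]
Dividing by $|\mathcal{P}_n|=q^n/n+O(q^{n/2}/n)$ yields $\ll\deg(f)\,q^{-n/2}\ll q^{-g}\deg(f)$, as claimed.

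The only delicate point is bookkeeping around non-primitivity and the possible trivial zero at $u=1$ (or $u=-1$). Both are handled by the crude bounds above: each such term contributes at most $O(\deg f)$, or $O(1)$ per zero, which is dominated by $\deg(f)q^{n/2}$. The main input, the Riemann Hypothesis bound $|\alpha_j|\le\sqrt q$, is Weil's theorem quoted earlier in the paper.
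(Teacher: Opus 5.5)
Your argument is correct. It is the standard proof of Rudnick's bound (2.5), which the paper cites rather than reproves: reduce to the fixed nonprincipal quadratic character attached to the square-free part $f_1$, then apply the explicit formula and Weil's Riemann Hypothesis. It even gives the slightly stronger $\ll q^{-g-\frac12}\deg(f)$, though note it needs Weil's theorem for the curve attached to $f_1$ of arbitrary degree, not only the version the paper quotes for $\chi_P$ with $\deg(P)$ odd.
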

	\begin{proof}
		See \cite{Rudnick2010}, equation (2.5).
	\end{proof}
	Also, we have the following upper bound for moments of Dirichlet L-functions. 
	\begin{corollary}\cite[Corollary~4.3]{BuiFlorea2020}\thlabel{Corollarybound}
		Let $u=e^{i\theta}$ with $\theta\in[0,2\pi)$. Then
		\[
		\frac{1}{|\mathcal{P}_{2g+1}|}\sum_{P\in\mathcal{P}_{2g+1}}\left|\mathcal{L}\left(\frac{u}{\sqrt{q}},\chi_P\right)\right|^2\ll_{\epsilon} g^{1+\epsilon}\min\left\{g,\frac{1}{\overline{2\theta}}\right\}^2,
		\]
		where $\overline{\theta}=\min\{\theta,2\pi-\theta\}$. 
	\end{corollary}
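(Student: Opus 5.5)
The plan is to follow Soundararajan's method for upper bounds on moments, which in the function field setting is unconditional because the Riemann hypothesis holds by Weil. This is the approach Bui and Florea take for this family.

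The first step is a pointwise bound for $\log|\mathcal{L}(u/\sqrt{q},\chi_P)|$ with $|u|=1$. Write $\mathcal{L}(u/\sqrt{q},\chi_P)=\prod_{j=1}^{2g}(1-e^{i\theta_j}u)$ using Weil's theorem. Then compare $\log|\mathcal{L}|$ with the explicit formula for $\sum_j e^{ik\theta_j}$, namely
\[
\sum_{j=1}^{2g} e^{ik\theta_j}=-\frac{1}{q^{k/2}}\sum_{\deg f=k}\Lambda(f)\chi_P(f),
\]
and smooth with a non-negative (Fejér-type) kernel of length $N\le 2g$. This should give, for every $N$,
\[
\log\Bigl|\mathcal{L}\Bigl(\frac{u}{\sqrt q},\chi_P\Bigr)\Bigr|\le \frac{2g}{N}+\Re\sum_{\substack{\deg f\le N}}\frac{\Lambda(f)\chi_P(f)u^{\deg f}}{|f|^{1/2}\deg f}\,\frac{N-\deg f}{N}+O(1).
\]
Only $f=Q$ and $f=Q^2$ matter; higher prime powers contribute $O(1)$. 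Since $\chi_P(Q^2)=1$ for $Q\neq P$, the prime-square part is deterministic:
\[
\Re\sum_{\deg Q\le N/2}\frac{u^{2\deg Q}}{|Q|}\Bigl(1-\tfrac{2\deg Q}{N}\Bigr)=\Re\sum_{n\le N/2}\frac{e^{2in\theta}}{n}(\cdots)+O(1)\le \log\min\Bigl\{N,\frac{1}{\overline{2\theta}}\Bigr\}+O(1).
\]
After doubling for the square $|\mathcal{L}|^2$, this is exactly the source of the factor $\min\{g,1/\overline{2\theta}\}^2$. The remaining random part is $S_N(P)=\Re\sum_{\deg Q\le N}\chi_P(Q)u^{\deg Q}|Q|^{-1/2}w(Q)$.

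The second step controls the distribution of $S_N(P)$ over $P\in\mathcal{P}_{2g+1}$. We bound high moments $\frac{1}{|\mathcal{P}_{2g+1}|}\sum_P S_N(P)^{2k}$ with $Nk\lesssim g$. Expanding the moment, the products of primes that are perfect squares give the Gaussian main term $\ll \frac{(2k)!}{k!2^k}V^k$, where $V=\sum_{\deg Q\le N}|Q|^{-1}\approx\log N$. The non-square products are bounded with \thref{Weilbound}, which contributes $q^{-g}\cdot q^{Nk}$ times polynomial factors and is negligible. Hence $\#\{P: S_N(P)>V'\}$ decays like $\exp(-V'^2/\log g)$ for $V'$ up to about $\log g\log\log g$.

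The third step is Soundararajan's bootstrapping. For each threshold $V'$, choose $N\approx g/V'$ and split the primes into a short range, where high moments are taken, and a longer range, where a moderate moment suffices. This yields
\[
\#\{P:\log|\mathcal{L}(u/\sqrt q,\chi_P)|-\log\min\{g,1/\overline{2\theta}\}>V'\}\ll |\mathcal{P}_{2g+1}|\,e^{-V'^2/\log g\,(1+o(1))}
\]
in the range where $V'$ is at most a constant times $\log g$, and a bound decaying like $e^{-cV'\log V'}$ beyond it. Integrating $e^{2V'}$ against this tail gives $\exp(\log g+o(\log g))=g^{1+\epsilon}$, which is the stated bound.

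I expect the main obstacle to be this large-deviation step in the uniformity needed: the variance $\log g$ must be kept exact up to $1+o(1)$ so that the exponent is $1+\epsilon$. At the same time the deterministic prime-square term must be tracked uniformly in $\theta$, so that it contributes precisely $\log\min\{g,1/\overline{2\theta}\}$ rather than $\log g$. I would handle the $\theta$-dependence by summing the square term $\sum_n e^{2in\theta}/n$ by parts before any averaging over $P$.
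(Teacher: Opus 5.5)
Your overall route is the right one. The paper gives no proof of this statement; it quotes Bui--Florea, who obtain it by running Soundararajan's argument unconditionally via Weil's Riemann hypothesis. However, your accounting of the $\theta$-dependence is wrong in two places that cancel only in your last line, so the uniform bound is not actually established.

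First, for $f=Q^2$ one has $\Lambda(f)/(|f|^{1/2}\deg f)=1/(2|Q|)$. Hence the deterministic term in $\log|\mathcal{L}(u/\sqrt q,\chi_P)|$ is $\frac12\sum_{n\le N/2}\frac{\cos(2n\theta)}{n}\bigl(1-\frac{2n}{N}\bigr)+O(1)\le\frac12\log m+O(1)$, where $m=\min\{g,1/\overline{2\theta}\}$. After squaring, this supplies only \emph{one} factor of $m$. The other factor comes from the variance of the random part, which you treated as independent of $\theta$. Since $\chi_P(Q)$ is real, $S_N(P)=\sum_Q\chi_P(Q)b_Q$ with $b_Q=\cos(\theta\deg Q)|Q|^{-1/2}w(Q)$. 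The diagonal (square) terms in the $2k$-th moment are therefore governed by
\[
\sum_{\deg Q\le N}b_Q^2=\sum_{n\le N}\frac{\cos^2(n\theta)}{n}+O(1)=\tfrac12\log N+\tfrac12\log\min\{N,1/\overline{2\theta}\}+O(1),
\]
not by $V=\sum_{\deg Q\le N}|Q|^{-1}\approx\log N$.

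Consequently each of your intermediate claims fails.
\begin{itemize}
\item With the variance $V\approx\log N$ that you state, the moment bound gives tails $e^{-V'^2/(2\log g)}$. The average of $e^{2S_N(P)}$ is then about $g^{2}$, and you end with $g^{2+\epsilon}m^2$. That is a full factor of $g$ too large when $\theta$ is away from $0$ and $\pi$, which is precisely the range feeding into \eqref{E3E4bound}.
\item The tail $e^{-V'^2/\log g}$ that you actually assert corresponds to variance $\frac12\log g$. It is false for instance at $\theta=0$ or $\theta=\pi$, where $\cos^2(n\theta)=1$ and the variance is $\approx\log g$.
\end{itemize}

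The correct bookkeeping is
\[
\log|\mathcal{L}(u/\sqrt q,\chi_P)|\le\tfrac12\log m+S_N(P)+2g/N+O(1),
\]
with $S_N$ having Gaussian-type tails of variance $\sigma^2=\frac12\log g+\frac12\log m$. This comes from keeping the factor $\cos^2(\theta\deg Q)$ in the diagonal of every moment in Soundararajan's iteration over ranges of primes. Integrating $e^{2V'}$ against these tails gives $m\cdot e^{(2+o(1))\sigma^2}=g^{1+o(1)}m^2$. So the missing idea is that the $\theta$-dependence must be carried in the second moment of the prime sum, not only in the deterministic prime-square term.
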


	\section{The tail}
	In this section we prove the following result.
	\begin{proposition}\thlabel{E(X)}
		For $X<g$, let 
		\[
		E(X)=\frac{1}{|\mathcal{P}_{2g+1}|}\sum_{P\in\mathcal{P}_{2g+1}}\sum_{2X-1<\deg(f)\leq 2g-1}\frac{\tau(f)\chi_P(f)}{\sqrt{|f}|}(2g-\deg(f))^2.
		\]
		Then 
		\[
		E(X)=\frac{2g^2(g-X)^3}{3\zeta_{\mathbb{A}}(2)}+\frac{g^2(g-X)^2}{\zeta_{\mathbb{A}}(2)}+\frac{g^2(g-X)}{3\zeta_{\mathbb{A}}(2)}+O_{\epsilon}(g^{\frac{3}{2}+\epsilon}(g-X)^3)+O(g^{\frac{1}{2}}(g-X)^5).
		\]
	\end{proposition}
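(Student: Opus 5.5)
The plan is to separate the contribution of perfect squares $f=h^2$, which gives the main terms exactly, from the non-square $f$. The latter are handled through a contour integral representation built from \thref{AFE}, whose average over $P$ is then controlled by \thref{Corollarybound}. Write $S_n(P)=\sum_{f\in\mathbb{A}^+_n}\tau(f)\chi_P(f)|f|^{-1/2}$.

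\emph{Step 1: main terms from squares.} Since $\deg h\le g-1<2g+1$, we have $\chi_P(h^2)=1$ for every $P\in\mathcal{P}_{2g+1}$. Hence the square part of $E(X)$ equals
\[
\sum_{X\le m\le g-1}4(g-m)^2\,q^{-m}\sum_{h\in\mathbb{A}^+_m}\tau(h^2).
\]
The inner sums come from the generating function
\[
\sum_{h}\frac{\tau(h^2)}{|h|}u^{\deg h}=\frac{\mathcal{Z}(u/q)^3}{\mathcal{Z}(u^2/q^2)}=\frac{1-u^2/q}{(1-u)^3},
\]
which gives $q^{-m}\sum_{\deg h=m}\tau(h^2)=\binom{m+2}{2}-\frac1q\binom{m}{2}$. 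We substitute $m=g-k$ with $1\le k\le g-X$ and sum the resulting polynomial in $k$ exactly. The coefficients $\frac{1}{2}(1-\frac1q)g^2=\frac{g^2}{2\zeta_{\mathbb{A}}(2)}$, multiplied by $4\sum_k k^2=\frac{4}{3}(g-X)^3+2(g-X)^2+\frac{2}{3}(g-X)$, produce the three stated main terms $\frac{2g^2(g-X)^3}{3\zeta_{\mathbb{A}}(2)}+\frac{g^2(g-X)^2}{\zeta_{\mathbb{A}}(2)}+\frac{g^2(g-X)}{3\zeta_{\mathbb{A}}(2)}$. All remaining pieces, of shape $g^a(g-X)^b$ with $a+b\le 5$ and $a\le1$, are absorbed into the error terms. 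I will need to check carefully that each such piece fits under $g^{3/2+\epsilon}(g-X)^3+g^{1/2}(g-X)^5$ (for instance $g(g-X)^4\le g^{1/2}(g-X)^5$ when $g-X\ge g^{1/2}$, and $\le g^{3/2}(g-X)^3$ otherwise).

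\emph{Step 2: contour representation.} By \thref{AFE}, for $n\le 2g$ the coefficient of $u^n$ in $\mathcal{L}(u/\sqrt q,\chi_P)^2$ is exactly $S_n(P)$. Hence
\[
E(X)=\frac{1}{|\mathcal{P}_{2g+1}|}\sum_{P}\frac{1}{2\pi i}\oint_{|u|=1}\mathcal{L}\Big(\frac{u}{\sqrt q},\chi_P\Big)^2K(u)\frac{du}{u},\qquad K(u)=\sum_{2X\le n\le 2g-1}\frac{(2g-n)^2}{u^{n}}.
\]
Summing the geometric-type series by parts twice gives $|K(e^{i\theta})|\ll\min\{(g-X)^3,(g-X)^2/\overline{\theta},(g-X)/\overline{\theta}^{2},\ g^2/\overline\theta\}$. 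To isolate the non-square contribution, I would subtract from the averaged $\mathcal{L}^2$ the explicit diagonal series $\sum_{\deg h\le g}\tau(h^2)u^{2\deg h}/|h|$ restricted to degrees $\le 2g$, which reproduces Step 1. By \thref{Weilbound}, the off-diagonal coefficients in the range $n\le 2g$ have averages $\ll q^{-g}n\,q^{n/2}\tau$-weighted. These are only usable for small $n$ directly, so the contour bound is needed for the bulk.

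\emph{Step 3: bounding the remainder, the main obstacle.} Inserting \thref{Corollarybound} naively gives $g^{1+\epsilon}\int\min\{g,1/\overline{2\theta}\}^2|K(e^{i\theta})|\,d\theta\ll g^{2+\epsilon}(g-X)^3$, which is a factor $g^{1/2}$ too weak. Near $\theta\approx\pi$ the Corollary bound is also of size $g^{3}$, because of $\overline{2\theta}$. My plan to recover the saving follows Bui and Florea. First, split the $\theta$-integral at $\overline{\theta},\overline{\theta-\pi}\asymp 1/g$ and at $1/(g-X)$. Second, near $\theta=0,\pi$, use Cauchy--Schwarz between the averaged $|\mathcal{L}|^2$ and the kernel, rather than the pointwise sup, together with the exact square main term subtracted as above. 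Third, use that $K$ is essentially orthogonal to the low-degree part. The residual loss there produces the $O(g^{1/2}(g-X)^5)$ term, the saving of $g^{1/2}$ arising from comparing the length $g-X$ of the kernel against $g$.

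I expect the delicate bookkeeping at $\theta$ near $0$ and $\pi$ to be where the argument lives or dies. If the Cauchy--Schwarz route fails, the fallback is to replace $K$ by a smoothed kernel, differencing in $X$, and bound the difference trivially by $g^{1/2}(g-X)^5$.
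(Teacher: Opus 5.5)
Your Step 1 is correct. The square part of the tail is $4\sum_{k=1}^{g-X}k^2c_{g-k}$ with $c_m=\binom{m+2}{2}-\frac1q\binom{m}{2}$; its $\frac12(1-\frac1q)g^2$-part gives exactly the three stated main terms, and the leftovers are absorbed as you say. Computing the diagonal over the \emph{whole} circle and then bounding a remainder is also a legitimate, and cleaner, alternative to the paper, which integrates the diagonal only over two short arcs and evaluates it through the sine sums of \thref{weightedlemma}.

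The gap is Step 3. It is the entire content of the proposition, and it is left as a list of possibilities, none of which produces the saving. Write $D(u)$ for the square part of the $P$-average of $\mathcal{L}(u/\sqrt q,\chi_P)^2$ and $R(u)$ for the rest. The non-square part of $E(X)$ is exactly $\frac{1}{2\pi i}\oint R(u)K(u)\frac{du}{u}$, and \thref{Weilbound} applied termwise already gives $O(g^2)$ for it. No argument that uses only these coefficient bounds can beat $g^2$; Cauchy--Schwarz with Parseval is even worse, giving $g^2(g-X)^{5/2}$. But $g^2$ is not within the claimed error when $g-X$ is small, e.g. $g-X\asymp\log g$, which is the case used for \thref{Main}. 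Near $u=\pm1$ the average of $|\mathcal{L}|^2$ is genuinely of size $g^3$, so no manipulation of \thref{Corollarybound} there helps. Using \thref{Corollarybound} on $1/g\le\overline{\theta}\le 1/(g-X)$, as your split points suggest, already costs $g^{2+\epsilon}(g-X)^3$.

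The missing idea is a split in the $u$-variable at the scale $\theta_1=g^{-1/2}$.
\begin{itemize}
\item \emph{Arcs.} Contrary to your remark that the Weil bound is usable only for small $n$, summing it over all $n\le 2g$ gives $R(u)\ll g^2$ uniformly on $|u|=1$. Use this on arcs of length $\asymp\theta_1$ about $u=\pm1$, for a contribution $\ll g^2\theta_1(g-X)^3$.
\item \emph{Complement.} Use \thref{Corollarybound} together with $|D(e^{i\theta})|\ll g\min\{g,1/\overline{2\theta}\}^2$, for a contribution $\ll g^{1+\epsilon}\theta_1^{-1}(g-X)^3$.
\end{itemize}
The choice $\theta_1=g^{-1/2}$ balances the two. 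So the $g^{1/2}$ saving comes from balancing $g^2\theta_1$ against $g^{1+\epsilon}\theta_1^{-1}$, not from comparing the length of $K$ with $g$.

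For the bound on $D$ you must subtract \emph{both} halves of the diagonal coming from \thref{AFE}, not only the part of degree $\le 2g$ as you propose. The full diagonal is $D(u)=\sum_{m=0}^{2g}c_{\min(m,2g-m)}u^{2m}$, whose coefficients are symmetric and tent-shaped. Hence the boundary terms of size $g^2/\overline{2\theta}$ from the two halves cancel, and two summations by parts give $g/\overline{2\theta}^{2}$. With only one half subtracted, the remainder is of size $g^2/\overline{2\theta}$ off the arcs, and bounding it there costs $g^2(g-X)^3\log g$. The second half contributes nothing to $\oint DK\,du/u$, since its frequencies exceed $2g$, so Step 1 is unchanged.

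With these changes your route closes, with error $O(g^{3/2+\epsilon}(g-X)^3+g(g-X)^4+(g-X)^5)$. In the paper the $g^{1/2}(g-X)^5$ term comes instead from Taylor expanding $\sin(2\pi(m\pm j)\theta_1)$ in the arc evaluation.

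A minor point: the entry $(g-X)/\overline{\theta}^{2}$ in your bound for $K$ is false, because the term $n=2X$ carries the largest weight. For example, $K(-1)=2(g-X)^2+(g-X)$.
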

	\begin{proof}
		We apply the function field analogue of Perron's formula to get
		\begin{align*}
			E(X)&=\frac{1}{2\pi i}\oint_{|u|=1}\frac{1}{|\mathcal{P}_{2g+1}|}\sum_{P\in\mathcal{P}_{2g+1}}\sum_{f\in\mathbb{A}^+}\frac{\tau(f)\chi_P(f)}{\sqrt{|f|}}\sum_{n=2X}^{2g-1}\frac{(2g-n)^2}{u^n}\frac{du}{u}\\
			&=\frac{1}{2\pi i}\oint_{|u|=1}\frac{1}{|\mathcal{P}_{2g+1}|}\sum_{P\in\mathcal{P}_{2g+1}}\mathcal{L}\left(\frac{u}{\sqrt{q}},\chi_P\right)^2\sum_{j=1}^{2g-2X}j^2u^j\frac{du}{u^{2g+1}}.
		\end{align*}
		Write the circular contour as $C_1+C_2+C_3+C_4$, where $C_1$ denotes the arc of angle $2\pi\theta_1$ centered around $u=1$, $C_2$ denotes the arc of angle $2\pi\theta_1$ around $u=-1$, where $\frac{1}{g}\ll \theta_1=o(1)$ and $C_3+C_4$ the complement of $C_1+C_2$. Let 
		\[
		E_i(X)=\frac{1}{2\pi i}\int_{C_i}\frac{1}{|\mathcal{P}_{2g+1}|}\sum_{P\in\mathcal{P}_{2g+1}}\mathcal{L}\left(\frac{u}{\sqrt{q}},\chi_P\right)^2\sum_{j=1}^{2g-2X}j^2u^j\frac{du}{u^{2g+1}},
		\]
		then 
		\begin{equation}\label{sumE}
			E(X)=E_1(X)+E_2(X)+E_3(X)+E_4(X).
		\end{equation}
		Note that 
		\[
		\left|\sum_{j=1}^{2g-2X}j^2u^j\right|\leq 8(g-X)^3
		\]
		so there is no pole at $u=1$. By \thref{Corollarybound}, we have for $i=3,4$
		\begin{equation}\label{E3E4bound}
			E_i(X)\ll_\epsilon g^{1+\epsilon}(g-X)^3\theta_1^{-1}.
		\end{equation}
		From \thref{AFE} and \thref{Weilbound}, we have 
		\[
		\frac{1}{|\mathcal{P}_{2g+1}|}\sum_{P\in\mathcal{P}_{2g+1}}\mathcal{L}\left(\frac{u}{\sqrt{q}},\chi_P\right)^2=\sum_{f\in\mathbb{A}^+_{\leq g }}\frac{\tau(f^2)u^{2\deg(f)}}{|f|}+u^{4g}\sum_{f\in\mathbb{A}^+_{\leq g-1}}\frac{\tau(f^2)}{|f|u^{2\deg(f)}}+O(g^2),
		\]
		and so for $i=1,2$ 
		\begin{align*}
			E_i(X)&=\frac{1}{2\pi i}\int_{C_i}\sum_{f\in\mathbb{A}^+_{\leq g}}\frac{\tau(f^2)}{|f|}\sum_{j=1}^{2g-2X}j^2u^j\frac{du}{u^{2g-2\deg(f)+1}}\\
			&+\frac{1}{2\pi i}\int_{C_i}\sum_{f\in\mathbb{A}^+_{\leq g-1}}\frac{\tau(f^2)}{|f|}\sum_{j=1}^{2g-2X}j^2u^j\frac{du}{u^{-2g+2\deg(f)+1}}+O(g^2(g-X)^3\theta_1).
		\end{align*}
		Applying the function field analogue of Perron's formula again gives
		\begin{align*}
			E_i(X)&=\frac{1}{(2\pi i)^2}\oint_{|v|=r}\int_{C_i}\frac{1}{(1-v)}\sum_{f\in\mathbb{A}^+}\frac{\tau(f^2)}{|f|}(vu^2)^{\deg(f)}\sum_{j=1}^{2g-2X}j^2u^j\frac{du}{u^{2g+1}}\frac{dv}{v^{g+1}}\\
			&+\frac{1}{(2\pi i)^2}\oint_{|v|=r}\int_{C_i}\frac{1}{(1-v)}\sum_{f\in\mathbb{A}^+}\frac{\tau(f^2)}{|f|}\left(\frac{v}{u^2}\right)^{\deg(f)}\sum_{j=1}^{2g-2X}j^2u^j\frac{du}{u^{-2g+1}}\frac{dv}{v^g}+O(g(g-X)^3\theta_1)
		\end{align*}
		for $r<1$. Since 
		\begin{equation}\label{sumtau}
			\sum_{f\in\mathbb{A}^+}\tau(f^2)w^{\deg(f)}=\frac{\mathcal{Z}(w)^3}{\mathcal{Z}(w^2)}=\frac{(1-qw^2)}{(1-qw)^3},
		\end{equation}
		then, using a change of variables, we have
		\begin{align*}
			E_i(X)&=\frac{1}{(2\pi i)^2}\oint_{|w|=r}\int_{C_i}\frac{(1-\frac{w^2}{q})}{(1-w)^3(1-\frac{w}{u^2})}\sum_{j=1}^{2g-2X}j^2u^j\frac{du}{u}\frac{dw}{w^{g+1}}\\
			&+\frac{1}{(2\pi i)^2}\oint_{|w|=r}\int_{C_i}\frac{(1-\frac{w^2}{q})}{(1-w)^3(1-wu^2)}\sum_{j=1}^{2g-2X}j^2u^judu\frac{dw}{w^g}+O(g(g-X)^3\theta_1).
		\end{align*}
		Using the change of variables $u=e^{2\pi i\theta}$, we have
		\begin{align*}
			\frac{1}{2\pi i}\int_{C_1}\frac{1}{u(1-\frac{w}{u^2})}\sum_{j=1}^{2g-2X}j^2u^jdu&=\sum_{j=1}^{2g-2X}j^2\sum_{n=0}^{\infty}w^n\frac{1}{2\pi i}\int_{C_1}u^{j-2n-1}du\\
			&=\sum_{j=1}^{2g-2X}j^2\sum_{n=0}^{\infty}w^n\int_{-\frac{\theta_1}{2}}^{\frac{\theta_1}{2}}e^{2\pi i\theta(j-2n)}d\theta\\
			&=\frac{1}{\pi}\sum_{j=1}^{2g-2X}j^2\sum_{n=0}^{\infty}w^n\frac{\sin((2n-j)\pi\theta_1)}{2n-j}.
		\end{align*}
		Similarly, using the change of variables $u=-e^{2\pi i\theta}$ we have 
		\begin{align*}
			\frac{1}{2\pi i}\int_{C_2}\frac{1}{u(1-\frac{w}{u^2})}\sum_{j=1}^{2g-2X}j^2u^jdu&=\sum_{j=1}^{2g-2X}j^2\sum_{n=0}^{\infty}w^n\frac{1}{2\pi i}\int_{C_2}u^{j-2n-1}du\\
			&=\sum_{j=1}^{2g-2X}j^2\sum_{n=0}^{\infty}w^n\int_{-\frac{\theta_1}{2}}^{\frac{\theta_1}{2}}(-e^{2\pi i \theta})^{(j-2n)}d\theta\\
			&=\frac{1}{\pi}\sum_{j=1}^{2g-2X}(-1)^jj^2\sum_{n=0}^{\infty}w^n\frac{\sin((2n-j)\pi\theta_1)}{2n-j}.
		\end{align*}
		Therefore 
		\begin{align*}
			&\frac{1}{2\pi i}\int_{C_1}\frac{1}{u(1-\frac{w}{u^2})}\sum_{j=1}^{2g-2X}j^2u^jdu+\frac{1}{2\pi i}\int_{C_2}\frac{1}{u(1-\frac{w}{u^2})}\sum_{j=1}^{2g-2X}j^2u^jdu\\&=\frac{1}{\pi}\sum_{j=1}^{2g-2X}(1+(-1)^j)j^2\sum_{n=0}^{\infty}w^n\frac{\sin((2n-j)\pi \theta_1)}{2n-j}.
		\end{align*}
		Using the fact that 
		\[
		1+(-1)^j=\begin{cases}
			2 &\text{if }j \text{ is even},\\ 0 & \text{if }j \text{ is odd}.
		\end{cases}
		\]
		Then 
		\begin{align}\label{C11C2}
			\frac{1}{\pi}\sum_{j=1}^{2g-2X}(1+(-1)^j)j^2\sum_{n=0}^{\infty}w^n\frac{\sin((2n-j)\pi \theta_1)}{2n-j}&=\frac{2}{\pi}\sum_{\substack{j=1\\j\text{ even}}}^{2g-2X}j^2\sum_{n=0}^{\infty}w^n\frac{\sin((2n-j)\pi\theta_1)}{2n-j}\nonumber\\
			&=\frac{4}{\pi}\sum_{j=1}^{g-X}j^2\sum_{n=0}^{\infty}w^n\frac{\sin(2\pi(n-j)\theta_1)}{n-j}.
		\end{align}
		Similarly, we have 
		\begin{align}\label{C12C2}
			&\frac{1}{2\pi i}\int_{C_1}\frac{uw}{(1-wu^2)}\sum_{j=1}^{2g-2X}j^2u^jdu+\frac{1}{2\pi i}\int_{C_2}\frac{uw}{(1-wu^2)}\sum_{j=1}^{2g-2X}j^2u^jdu\nonumber\\
			&=\frac{4}{\pi}\sum_{j=0}^{g-X}j^2\sum_{n=1}^{\infty}w^n\frac{\sin(2\pi(n+j)\theta_1)}{n+j}.
		\end{align}
		Therefore, combining (\ref{sumE}), (\ref{E3E4bound}), (\ref{C11C2}) and (\ref{C12C2}) we have 
		\begin{align*}
			E(X)&=\frac{4}{\pi}\frac{1}{2\pi i}\oint_{|w|=r}\sum_{j=1}^{g-X}j^2\left(\sum_{n=0}^gw^n\frac{\sin(2\pi(n-j)\theta_1)}{n-j}+\sum_{n=1}^gw^n\frac{\sin(2\pi(n+j)\theta_1)}{n+j}\right)\\
			&+O(g^2(g-X)^3\theta)+O_{\epsilon}(g^{1+\epsilon}(g-X)^3\theta_1^{-1}).
		\end{align*}
		Enlarging the contour and evaluating the residue at $w=1$, we have 
		\begin{align}
			E(X)&=\frac{2}{\pi}\left(\sum_{j=1}^{g-X}j^2\sum_{n=0}^g\frac{\sin(2\pi(n-j)\theta_1)}{n-j}P(n)+\sum_{j=1}^{g-X}j^2\sum_{n=1}^g\frac{\sin(2\pi(n+j)\theta_1)}{n+j}P(n)\right)\nonumber\\
			&+O(g^2(g-X)^3\theta_1)+O_{\epsilon}(g^{1+\epsilon}(g-X)^3\theta_1^{-1}),
		\end{align}
		where 
		\begin{align*}
			P(x)=\left(1-\frac{1}{q}\right)g^2-2\left(1-\frac{1}{q}\right)gx+\left(1-\frac{1}{q}\right)x^2+\left(3+\frac{1}{q}\right)g-\left(3+\frac{1}{q}\right)x+2.
		\end{align*}
		Invoking \thref{weightedlemma} with $\alpha=g-X$, we have 
		\begin{align}
			E(X)&=\frac{2g^2(g-X)^3}{3\zeta_{\mathbb{A}}(2)}+\frac{g^2(g-X)^2}{\zeta_{\mathbb{A}}(2)}+\frac{g^2(g-X)}{3\zeta_{\mathbb{A}}(2)}\nonumber\\
			&+O(g^2(g-X)^3\theta_1)+O_{\epsilon}(g^{1+\epsilon}(g-X)^3\theta_1^{-1})+O(g(g-X)^3\theta_1^{-1})+O(g(g-X)^5\theta_1).
		\end{align}
		Choosing $\theta_1=\frac{1}{\sqrt{g}}$ proves the proposition. 
	\end{proof}
	
	\section{The Main Term}
	\begin{proposition}\thlabel{S(X)}
		For $X<g$, let 
		\[
		S(X)=\frac{1}{|\mathcal{P}_{2g+1}|}\sum_{P\in\mathcal{P}_{2g+1}}\sum_{f\in\mathbb{A}^+_{\leq 2X-1}}\frac{\tau(f)\chi_P(f)}{\sqrt{|f|}}(2g-\deg(f))^2.
		\]
		Then 
		\begin{align*}
			S(X)&=\frac{g^5}{15\zeta_{\mathbb{A}}(2)}+g^4\left(\frac{1}{2}+\frac{1}{6q}\right)+\frac{4g^3}{3}+g^2\left(\frac{3}{2}-\frac{1}{6q}\right)\\
			&-\frac{2g^2(g-X)^3}{3\zeta_{\mathbb{A}}(2)}-\frac{g^2(g-X)^2}{\zeta_{\mathbb{A}}(2)}-\frac{g^2(g-X)}{3\zeta_{\mathbb{A}}(2)}+O(q^{-g+X}g^2(g-X)^2)+O(g(g-X)^4).
		\end{align*}
	\end{proposition}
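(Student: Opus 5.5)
Interchange the sums and split $f$ into squares and non-squares. For $f$ not a square, \thref{Weilbound} gives $\frac{1}{|\mathcal{P}_{2g+1}|}\sum_{P}\chi_P(f)\ll q^{-g}\deg(f)$. Since $\sum_{\deg f=n}\tau(f)|f|^{-1/2}\ll n q^{n/2}$, the non-square contribution is at most
\[
q^{-g}\sum_{n\le 2X-1} n^2 q^{n/2}(2g-n)^2\ll q^{-g+X}g^2(g-X)^2 ,
\]
because the sum is dominated by $n$ near $2X$, where $(2g-n)^2\asymp (g-X)^2$. This is the first error term.

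For $f=h^2$ with $\deg h\le X-1$, note that $\chi_P(h^2)=1$ unless $P\mid h$, which is impossible since $\deg h<2g+1$. So the square terms contribute exactly
\[
\sum_{n=0}^{X-1}(2g-2n)^2\sum_{h\in\mathbb{A}^+_n}\frac{\tau(h^2)}{|h|}
=4\sum_{n=0}^{X-1}(g-n)^2\,a_n ,
\]
where $a_n$ is the coefficient of $w^n$ in $\frac{1-w^2/q}{(1-w)^3}$, by (\ref{sumtau}) after rescaling $w\mapsto w/q$. Explicitly,
\[
a_n=\binom{n+2}{2}-\frac1q\binom{n}{2}=\Bigl(1-\frac1q\Bigr)\frac{n^2}{2}+\Bigl(\frac32+\frac1{2q}\Bigr)n+1 .
\]
The main term is therefore the exact polynomial identity $4\sum_{n=0}^{X-1}(g-n)^2a_n$, which I evaluate with Faulhaber sums.

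To organise the result, I write $4\sum_{n=0}^{X-1}=4\sum_{n=0}^{g}-4\sum_{n=X}^{g}$. The full sum $4\sum_{n=0}^{g}(g-n)^2a_n$ is a polynomial in $g$. Its leading term is $4\cdot\frac{1}{2}(1-\frac1q)g^5 B(3,3)=2(1-\frac1q)\frac{g^5}{30}=\frac{g^5}{15\zeta_{\mathbb{A}}(2)}$, using $1/\zeta_{\mathbb{A}}(2)=1-\frac1q$. The lower-order coefficients $\frac12+\frac1{6q}$, $\frac43$ and $\frac32-\frac1{6q}$ come out of the same computation. Some $g^1$ or $g^0$ terms may remain; if so, they are absorbed into the error terms once $X$ is chosen.

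For the tail, substitute $m=g-n$, so that $0\le m\le g-X$ and
\[
a_{g-m}=\frac{1}{2\zeta_{\mathbb{A}}(2)}g^2-\frac{1}{\zeta_{\mathbb{A}}(2)}gm+O(g+m^2).
\]
Then
\[
4\sum_{m=0}^{g-X}m^2a_{g-m}=\frac{2g^2}{\zeta_{\mathbb{A}}(2)}\sum_{m\le g-X}m^2+O\bigl(g(g-X)^4\bigr).
\]
Since $\sum_{m\le \alpha}m^2=\frac{\alpha^3}{3}+\frac{\alpha^2}{2}+\frac{\alpha}{6}$ with $\alpha=g-X$, the tail equals
\[
\frac{2g^2\alpha^3}{3\zeta_{\mathbb{A}}(2)}+\frac{g^2\alpha^2}{\zeta_{\mathbb{A}}(2)}+\frac{g^2\alpha}{3\zeta_{\mathbb{A}}(2)}+O\bigl(g\alpha^4\bigr).
\]
This matches the negative terms in the statement and the main terms of \thref{E(X)}.

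The main obstacle is the bookkeeping. The exact full-sum polynomial has to reproduce the stated coefficients, and every discarded piece must be checked to be $O(g(g-X)^4)$. That includes $g\alpha^3$ terms such as $-\frac{4}{\zeta_{\mathbb{A}}(2)}g\sum m^3$ and the $O(g)$ parts of $a_{g-m}$, both of which are $O(g\alpha^4)$ for $\alpha\ge1$. I would double-check the $g^4$ and $g^2$ coefficients by evaluating the full sum at small values of $g$.
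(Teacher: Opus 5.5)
Your argument is correct, and for the main term it takes a different and somewhat cleaner route than the paper. The non-square bound via the Weil bound and the reduction to $4\sum_{n=0}^{X-1}(g-n)^2a_n$ are the same as in the paper.

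The paper then handles this finite sum differently. It writes it as three contour integrals $S_1,S_2,S_3$, corresponding to $(g-n)^2=(g-X)^2+2(g-X)(X-n)+(X-n)^2$. It evaluates the residues at the poles of order $4,5,6$ at $u=1$ to get polynomials in $X$, then substitutes $X=g-(g-X)$ and re-expands. In that route the $g^4(g-X)$ and $g^3(g-X)^2$ terms only disappear after cancellation among $S_1,S_2,S_3$.

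You instead extract coefficients directly and split $\sum_{n=0}^{X-1}=\sum_{n=0}^{g}-\sum_{n=X}^{g}$. The $X$-independent polynomial and the $(g-X)$-dependent terms then come out separately, so no cross-cancellation has to be tracked. This also shows directly why the subtracted terms are $\frac{2g^2}{\zeta_{\mathbb{A}}(2)}\sum_{m\le g-X}m^2$, the same shape as the main term of the tail proposition.

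The computation you deferred does go through. Using $\sum_{n=0}^{g}(g-n)^2=\frac{g^3}{3}+\frac{g^2}{2}+\frac g6$, $\sum_{n=0}^{g}(g-n)^2n=\frac{g^4-g^2}{12}$ and $\sum_{n=0}^{g}(g-n)^2n^2=\frac{g^5-g}{30}$, one gets
\[
4\sum_{n=0}^{g}(g-n)^2a_n=\frac{g^5}{15\zeta_{\mathbb{A}}(2)}+g^4\Bigl(\frac12+\frac1{6q}\Bigr)+\frac{4g^3}{3}+g^2\Bigl(\frac32-\frac1{6q}\Bigr)+g\Bigl(\frac23-\frac1{15\zeta_{\mathbb{A}}(2)}\Bigr).
\]
The only leftover is this linear term, and no choice of $X$ is needed to absorb it. Since $X<g$ are integers, $g-X\ge 1$, so $O(g)\subseteq O(g(g-X)^4)$ already in the proposition as stated.

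The paper's contour-integral formulation buys uniformity with its Perron-formula treatment of $E(X)$ and with Bui--Florea. For this finite sum with an explicit rational generating function, though, it is a more roundabout way of doing your coefficient extraction.
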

	\begin{proof}
		Split the sum into two sums, the first over $f=\square$ and the second over $f\neq \square$. Using \thref{Weilbound}, we have
		\begin{align*}
			\frac{1}{|\mathcal{P}_{2g+1}|}\sum_{P\in\mathcal{P}_{2g+1}}\sum_{\substack{f\in\mathbb{A}^+_{\leq 2g-1}\\f\neq \square}}\frac{\tau(f)\chi_P(f)}{\sqrt{|f|}}(2g-\deg(f))^2&\ll q^{-g+X}g^2(g-X)^2.
		\end{align*}
		Hence
		\begin{align*}
			S(X)&=\frac{1}{|\mathcal{P}_{2g+1}|}\sum_{P\in\mathcal{P}_{2g+1}}\sum_{\substack{f\in\mathbb{A}^+_{\leq 2X-1}\\f=\square}}\frac{\tau(f)\chi_P(f)}{\sqrt{|f|}}(2g-\deg(f))^2+O(q^{-g+X}g^2(g-X)^2)\\
			&=4\sum_{f\in\mathbb{A}^+_{\leq X-1}}\frac{\tau(f^2)}{|f|}(g-\deg(f))^2+O(q^{-g+X}g^2(g-X)^2).
		\end{align*}
		Using the function field analogue of Perron's formula and (\ref{sumtau}), we have 
		\begin{align}\label{S(X)sum}
			S(X)&=\frac{4}{2\pi i}\oint_{|u|=r}\sum_{f\in\mathbb{A}^+}\frac{\tau(f^2)}{|f|}u^{\deg(f)}\sum_{n=0}^{X-1}\frac{(g-n)^2}{u^n}\frac{du}{u}+O(q^{-g+X}g^2(g-X)^2)\nonumber\\
			&=\frac{4(g-X)^2}{2\pi i}\oint_{|u|=r}\frac{(1-\frac{u^2}{q})}{(1-u)^4u^X}du+\frac{8(g-X)}{2\pi i}\oint_{|u|=r}\frac{(1-\frac{u^2}{q})}{(1-u)^5u^X}du\nonumber\\&+\frac{4}{2\pi i}\oint_{|u|=r}\frac{(1-\frac{u^2}{q})(1+u)}{(1-u)^6u^X}du+O(q^{-g+X}g^2(g-X)^2)
		\end{align}
		Denote by $S_1(X)$, $S_2(X)$ and $S_3(X)$ the first, second, and third terms of (\ref{S(X)sum}) respectively. Enlarging the contour and evaluating the residue at $u=1$, we have 
		\[
		S_1(X)=\frac{2}{3}X^3(g-X)^2\left(1-\frac{1}{q}\right)+2X^2(g-X)^2\left(1+\frac{1}{q}\right)+O(X(g-X)^2),
		\]
		\[
		S_2(X)=\frac{1}{3}X^4(g-X)\left(1-\frac{1}{q}\right)+2X^3(g-X)\left(1+\frac{1}{3q}\right)+\frac{1}{3}X^2(g-X)\left(11+\frac{1}{q}\right)+O(X(g-X)),
		\]
		\begin{align*}
			S_3(X)=\frac{X^5}{15}\left(1-\frac{1}{q}\right)+X^4\left(\frac{1}{2}+\frac{1}{6q}\right)+\frac{4X^3}{3}+X^2\left(\frac{3}{2}-\frac{1}{6q}\right)+O(X).
		\end{align*}
		Using the notion $X=g-(g-X)$ and expanding, we have
		\begin{equation}\label{S1(X)}
			S_1(X)=\frac{2}{3}g^3(g-X)^2\left(1-\frac{1}{q}\right)-2g^2(g-X)^3+2g^2(g-X)^2\left(1+\frac{1}{q}\right)+O(g(g-X)^4),
		\end{equation}
		\begin{align}\label{S2(X)}
			S_2(X)&=\frac{1}{3}g^4(g-X)\left(1-\frac{1}{q}\right)-\frac{4}{3}g^3(g-X)^2\left(1-\frac{1}{q}\right)+2g^2(g-X)\left(1-\frac{1}{q}\right)\nonumber\\
			&+2g^3(g-X)\left(1+\frac{1}{3q}\right)-6g^2(g-X)^2\left(1+\frac{1}{3q}\right)+\frac{1}{3}g^2(g-X)\left(11+\frac{1}{q}\right)+O(g(g-X)^4),
		\end{align}
		\begin{align}\label{S3(X)}
			S_3(X)&=\frac{1}{15}g^5\left(1-\frac{1}{q}\right)-\frac{1}{3}g^4(g-X)\left(1-\frac{1}{q}\right)+\frac{2}{3}g^3(g-X)^2\left(1-\frac{1}{q}\right)-\frac{2}{3}g^2(g-X)^3\left(1-\frac{1}{q}\right)\nonumber\\
			&+g^4\left(\frac{1}{2}+\frac{1}{6q}\right)-4g^3(g-X)\left(\frac{1}{2}+\frac{1}{6q}\right)+6g^2(g-X)^2\left(\frac{1}{2}+\frac{1}{6q}\right)\nonumber\\&+\frac{4g^3}{3}-4g^2(g-X)+g^2\left(\frac{3}{2}-\frac{1}{6q}\right)+O(g(g-X)^4).
		\end{align}
		Combining (\ref{S1(X)}), (\ref{S2(X)}) and (\ref{S3(X)}) with (\ref{S(X)sum}) completes the proof of the proposition. 
	\end{proof}
	\section{Proof of \thref{Main}}
	\begin{proof}[Proof of \thref{Main}]
		Using \thref{AFELambda} we have
		\[
		\frac{1}{|\mathcal{P}_{2g+1}|}\sum_{P\in\mathcal{P}_{2g+1}}\frac{\Lambda^{''}\left(\frac{1}{2},\chi_P\right)\Lambda\left(\frac{1}{2},\chi_P\right)}{(\log q)^2}=\frac{1}{|\mathcal{P}_{2g+1}|}\sum_{P\in\mathcal{P}_{2g+1}}\sum_{f\in\mathbb{A}^+_{\leq 2g-1}}\frac{\tau(f)\chi_P(f)}{\sqrt{|f|}}(2g-\deg(f))^2
		\]
		Splitting the sum up at $\deg(f)=2X-1$, we have
		\[
		\frac{1}{|\mathcal{P}_{2g+1}|}\sum_{P\in\mathcal{P}_{2g+1}}\frac{\Lambda^{''}\left(\frac{1}{2},\chi_P\right)\Lambda\left(\frac{1}{2},\chi_P\right)}{(\log q)^2}=S(X)+E(X),
		\]
		where $S(X)$ and $E(X)$ are defined in \thref{S(X)} and \thref{E(X)} respectively. Then using \thref{S(X)} and \thref{E(X)} we have 
		\begin{align*}
			&\frac{1}{|\mathcal{P}_{2g+1}|}\sum_{P\in\mathcal{P}_{2g+1}}\frac{\Lambda^{''}\left(\frac{1}{2},\chi_P\right)\Lambda\left(\frac{1}{2},\chi_P\right)}{(\log q)^2}\\&=\frac{g^5}{15\zeta_{\mathbb{A}}(2)}+g^4\left(\frac{1}{2}+\frac{1}{6q}\right)+\frac{4g^3}{3}+g^2\left(\frac{3}{2}-\frac{1}{6q}\right)\\
			+&O_{\epsilon}(g^{\frac{3}{2}+\epsilon}(g-X)^3)+O(g^{\frac{1}{2}}(g-X)^5)+O(q^{-g+X}g^2(g-X)^2)+O(g(g-X)^4).
		\end{align*}
		Choosing $X=g-[100\log g]$ completes the proof.
	\end{proof}
	
	\section{Appendix - A weighted Sum involving $\sin$}
	
	In this section, we prove a weighted version of the double sum found in \cite[Lemma~9.4]{Florea2017fourth} that is essential for the proof of \thref{Main}. First we require two results, which are sums involving $\sin$ and $\cos$.  
	
	\begin{lemma}\cite[Lemma~9.2]{Florea2017fourth}\thlabel{sinecossum}
		Let $\theta$ be such that $\frac{1}{\theta}=o(g)$ and $k\geq 1$. Then 
		\[
		\sum_{m=1}^gm^k\sin(m\theta)=-\frac{g^k\cos((g+\frac{1}{2})\theta)}{2\sin(\frac{\theta}{2})}+O\left(g^{k-1}\frac{1}{\sin^2(\frac{\theta}{2})}\right)
		\]
		and
		\[
		\sum_{m=1}^gm^k\cos(m\theta)=\frac{g^k\sin((g+\frac{1}{2})\theta)}{2\sin(\frac{\theta}{2})}+O\left(g^{k-1}\frac{1}{\sin^2(\frac{\theta}{2})}\right).
		\]  
	\end{lemma}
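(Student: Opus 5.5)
The approach is induction on $k$, starting from the closed forms at $k=0$ and handling each power of $m$ by Abel summation. Write $S_k(\theta)=\sum_{m=1}^g m^k\sin(m\theta)$ and $C_k(\theta)=\sum_{m=1}^g m^k\cos(m\theta)$. For $k=0$ the standard telescoping identities obtained from $2\sin(\frac{\theta}{2})\sin(m\theta)=\cos((m-\frac12)\theta)-\cos((m+\frac12)\theta)$ and the analogous identity for cosine give exactly
\[
S_0=\frac{\cos(\frac{\theta}{2})-\cos((g+\frac12)\theta)}{2\sin(\frac{\theta}{2})},\qquad C_0=\frac{\sin((g+\frac12)\theta)-\sin(\frac{\theta}{2})}{2\sin(\frac{\theta}{2})}.
\]
Each is of the form ``main term'' plus $O(1)$.

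For $k\ge1$ we apply summation by parts with the partial sums $A(m)=S_0$ evaluated at $g=m$:
\[
S_k=g^kA(g)-\sum_{m=1}^{g-1}A(m)\bigl((m+1)^k-m^k\bigr).
\]
The boundary term $g^kA(g)$ yields precisely the stated main term $-\frac{g^k\cos((g+\frac12)\theta)}{2\sin(\frac{\theta}{2})}$, together with an extra $g^k\cot(\frac{\theta}{2})/2$. This extra piece must cancel against the remaining sum, so some care is needed.

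In the remaining sum we substitute the explicit form of $A(m)$. The constant part $\frac{\cos(\theta/2)}{2\sin(\theta/2)}$ times $\sum_m((m+1)^k-m^k)$ telescopes to roughly $g^k\cot(\frac{\theta}{2})/2$, which cancels the extra boundary piece up to $O(\cot(\theta/2))$. The oscillating part is
\[
\frac{1}{2\sin(\frac{\theta}{2})}\sum_{m}\bigl((m+1)^k-m^k\bigr)\cos\bigl((m+\tfrac12)\theta\bigr).
\]
Here $(m+1)^k-m^k$ is a polynomial of degree $k-1$ in $m$, so by the inductive hypothesis, applied to cosine sums of lower degree after expanding $\cos((m+\frac12)\theta)$, this sum is $O(g^{k-1}/\sin(\frac{\theta}{2}))$. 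Multiplying by $1/\sin(\frac{\theta}{2})$ gives the error $O(g^{k-1}\sin^{-2}(\frac{\theta}{2}))$. The cosine case is handled identically.

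The main obstacle is bookkeeping, namely keeping the induction hypothesis strong enough. One needs the bound $\sum m^j e^{im\theta}\ll g^j/|\sin(\theta/2)|$ for all $j<k$; this is in fact implied by the stated formulas for $j<k$, since the main term is of that size and the error term is smaller when $g\sin(\theta/2)\gg1$. The hypothesis $1/\theta=o(g)$ guarantees $g\theta\to\infty$, so the $O(\cot(\theta/2))$ residue and the lower-order terms are all absorbed into $O(g^{k-1}\sin^{-2}(\frac{\theta}{2}))$. A cleaner alternative is to work with the complex sum $\sum m^k e^{im\theta}$ and take real and imaginary parts at the end.
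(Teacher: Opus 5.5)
Your proof is correct. The paper gives no argument of its own here, since it quotes the lemma from Florea. The only piece of that argument visible in the paper is the closed form $\sum_{m=1}^{g}\sin(m\theta)=\frac{\cos(\theta/2)-\cos((g+\frac12)\theta)}{2\sin(\theta/2)}$, used later for $A(1,\theta)$, and this is also your starting point. Your summation by parts then gives a complete proof:
\begin{itemize}
\item The boundary term contributes the main term plus $\frac{g^k}{2}\cot(\frac{\theta}{2})$.
\item The constant part of the partial sums telescopes to $-\frac{g^k-1}{2}\cot(\frac{\theta}{2})$, leaving $\frac12\cot(\frac{\theta}{2})$ (and $-\frac12$ in the cosine case).
\item The oscillating part is $O(g^{k-1}\sin^{-2}(\frac{\theta}{2}))$.
\end{itemize}

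There is one small slip and one simplification. The slip: your remark that $S_0$ equals the main term plus $O(1)$ is false. The remainder is $\frac12\cot(\frac{\theta}{2})\asymp\theta^{-1}$, which is exactly why the lemma is restricted to $k\ge1$. This does no harm, because you carry that term explicitly in the Abel summation.

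The simplification: you do not need the induction. The weights $(m+1)^k-m^k$ are nonnegative and nondecreasing, and the partial sums of $\cos((m+\frac12)\theta)$ are bounded by $|\sin(\frac{\theta}{2})|^{-1}$. So Abel's inequality bounds the oscillating part by $k g^{k-1}\sin^{-2}(\frac{\theta}{2})$ directly. This also removes your step from $g\theta\to\infty$ to $g\sin(\frac{\theta}{2})\gg1$, which tacitly requires $\theta$ to stay away from $2\pi$. In fact the estimate then holds for every $\theta$ with $\sin(\frac{\theta}{2})\neq0$; the hypothesis $1/\theta=o(g)$ only serves to make the main term dominate the error.

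Another equally short route is to differentiate the exact geometric sum $\sum_{m\le g}e^{im\theta}$ $k$ times. Each derivative that falls on $(e^{i\theta}-1)^{-1}$ rather than on $e^{i(g+1)\theta}$ costs a factor $(g\sin(\frac{\theta}{2}))^{-1}$.
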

	
	\begin{lemma}\cite[Lemma~9.3]{Florea2017fourth}\thlabel{sine/ksum}
		Let $\theta$ be such that $\frac{1}{\theta}=o(g)$. Then
		\[
		\sum_{k=1}^{a-1}\frac{\sin(k\theta)}{k}=\frac{\pi-\theta}{2}-\frac{\cos(a\theta)}{2a\sin(\frac{\theta}{2})}-\frac{\sin(a\theta)}{2a}+O\left(\frac{1}{a^2\sin^2(\frac{\theta}{2})}\right)
		\]  
	\end{lemma}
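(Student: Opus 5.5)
The plan is to write the finite sum as the full Fourier series minus its tail and evaluate the tail by two rounds of Abel summation. For $0<\theta<2\pi$ the classical series gives
\[
\sum_{k=1}^{\infty}\frac{\sin(k\theta)}{k}=\frac{\pi-\theta}{2},
\]
so it suffices to show that $R_a:=\sum_{k\geq a}\frac{\sin(k\theta)}{k}$ equals $\frac{\cos(a\theta)}{2a\sin(\frac{\theta}{2})}+\frac{\sin(a\theta)}{2a}$ up to an admissible error. The hypothesis $\frac{1}{\theta}=o(g)$ ensures $\theta$ stays away from $0$ on the scale of $1/g$, so $\sin(\frac{\theta}{2})\neq 0$. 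In the intended application $\theta$ is also small, so I will take $0<\theta\leq\pi$.

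For the first step, put $T_k=-\frac{\cos((k+\frac{1}{2})\theta)}{2\sin(\frac{\theta}{2})}$. The product-to-sum identity gives $\sin(k\theta)=T_k-T_{k-1}$, and $|T_k|\leq\frac{1}{2\sin(\frac{\theta}{2})}$. Summing by parts,
\[
R_a=-\frac{T_{a-1}}{a}+\sum_{k\geq a}\frac{T_k}{k(k+1)}
=\frac{\cos((a-\frac12)\theta)}{2a\sin(\frac{\theta}{2})}+\sum_{k\geq a}\frac{T_k}{k(k+1)}.
\]
The boundary term at infinity vanishes because $T_k$ is bounded. Expanding $\cos((a-\frac12)\theta)=\cos(a\theta)\cos(\frac{\theta}{2})+\sin(a\theta)\sin(\frac{\theta}{2})$ turns the boundary term into
\[
\frac{\cos(a\theta)\cot(\frac{\theta}{2})}{2a}+\frac{\sin(a\theta)}{2a}.
\]

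For the second step I treat the remaining sum $\sum_{k\geq a}\frac{T_k}{k(k+1)}$. A trivial bound gives only $O\big(\frac{1}{a\sin(\frac{\theta}{2})}\big)$, which is too weak, so I sum by parts again. The partial sums of $\cos((k+\frac12)\theta)$ are $\frac{\sin((K+1)\theta)-\sin(a\theta)}{2\sin(\frac{\theta}{2})}$, hence bounded by $\frac{1}{\sin(\frac{\theta}{2})}$. The weights $\frac{1}{k(k+1)}$ are monotone with total variation $\ll a^{-2}$. Together with the factor $\frac{1}{2\sin(\frac{\theta}{2})}$ already in $T_k$, this gives $O\big(\frac{1}{a^2\sin^2(\frac{\theta}{2})}\big)$. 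Subtracting $R_a$ from $\frac{\pi-\theta}{2}$ then yields the stated main terms, with $\cot(\frac{\theta}{2})$ in place of $\frac{1}{\sin(\frac{\theta}{2})}$.

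The main obstacle is reconciling $\cot(\frac{\theta}{2})$ with $\frac{1}{\sin(\frac{\theta}{2})}$. Their difference contributes
\[
\frac{\cos(a\theta)\,(1-\cos(\frac{\theta}{2}))}{2a\sin(\frac{\theta}{2})}=O\Big(\frac{\theta}{a}\Big),
\]
and this is absorbed by $O\big(\frac{1}{a^2\sin^2(\frac{\theta}{2})}\big)$ only when $a\theta^3\ll 1$. First I would check that this condition holds in the application: there $\theta=2\pi\theta_1$ with $\theta_1=g^{-1/2}$ and $a\ll g$, so $a\theta^3\ll g^{-1/2}$. If it did not, I would keep the $\cot(\frac{\theta}{2})$ form, which is what the Abel-summation computation actually delivers. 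I would carry that form through the weighted-sum lemma, where only the size $\asymp\frac{1}{\theta}$ of this coefficient matters.
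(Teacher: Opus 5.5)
Your proof is correct. The paper gives no argument of its own here; it simply quotes Florea's Lemma 9.3, so there is nothing internal to compare against. Your route (the classical Fourier series for $\frac{\pi-\theta}{2}$, then two Abel summations on the tail using $T_k=-\cos((k+\frac12)\theta)/(2\sin\frac{\theta}{2})$) is the standard proof, and every step checks. It gives, for all $0<\theta<2\pi$,
\[
\sum_{k=1}^{a-1}\frac{\sin(k\theta)}{k}=\frac{\pi-\theta}{2}-\frac{\cos(a\theta)\cot(\frac{\theta}{2})}{2a}-\frac{\sin(a\theta)}{2a}+O\Big(\frac{1}{a^2\sin^2(\frac{\theta}{2})}\Big).
\]

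Your caveat is more than caution: the lemma as printed is false under the sole hypothesis $\frac{1}{\theta}=o(g)$. For $0<\theta\leq\pi$ the discrepancy $\frac{\cos(a\theta)\tan(\theta/4)}{2a}$ really is of exact order $\theta/a$ whenever $\cos(a\theta)$ is bounded away from $0$.

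\begin{itemize}
\item Take $\theta=\frac{\pi}{2}$ and $4\mid a$. The left side is the Leibniz series truncated after $a/2$ terms, which equals $\frac{\pi}{4}-\frac{1}{2a}+O(a^{-2})$. Your $\cot$ form reproduces this. The printed right side gives $\frac{\pi}{4}-\frac{1}{\sqrt{2}\,a}+O(a^{-2})$, which is off by about $0.207/a$.
\item Small $\theta$ does not rescue it either. With $\theta=g^{-1/4}$ and $a\asymp g$, the error $\theta/a\asymp g^{-5/4}$ exceeds the stated error term $\asymp g^{-3/2}$.
\end{itemize}

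So either the extra condition $a\theta^3\ll 1$ or your $\cot$ form is genuinely needed. You are right that the condition holds in the paper's use, where $\theta=2\pi g^{-1/2}$ and $a=g\pm\alpha+1$.

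The choice of $\theta_1$ does not even matter there. In the $k=0$ case of the weighted lemma, the extra term contributes $O(\alpha^3\theta/g)$. That is already dominated by the $O(g^{-1}\theta\alpha^5)$ error recorded in that case, so nothing downstream is affected.
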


	\begin{lemma}\thlabel{weightedlemma}
		For $0\leq k\leq 9$ and $\frac{1}{\theta}=o(g)$, let 
		\[
		A(k,\theta):=\sum_{j=1}^{\alpha}j^2\sum_{m=0}^gm^k\frac{\sin(2\pi(m-j)\theta)}{m-j}+\sum_{j=1}^\alpha j^2\sum_{m=1}^gm^k\frac{\sin(2\pi(m+j)\theta)}{m+j}.
		\]
		Then
		\begin{align*}
			&A(k,\theta)\\&=\begin{cases}
				-g^{k-1}\left(\frac{\alpha^3}{3}+\frac{\alpha^2}{2}+\frac{\alpha}{6}\right)\left(\frac{\cos(2g\pi\theta)}{\pi \theta}-\sin(2g\pi\theta)\right)+O(\alpha^5g^{k-1}\theta+\alpha^4g^{k-2}\theta^{-1}) & \text{if }k\geq 2 \\
				-\left(\frac{\alpha^3}{3}+\frac{\alpha^2}{2}+\frac{\alpha}{6}\right)\left(\frac{\cos(2g\pi\theta)}{\pi \theta}-\sin(2g\pi\theta)-\cot(2\pi\theta)\right)\\
				+\sum_{m=1}^gm\sin(2\pi m\theta)(A(m)+B(m))+O(\theta \alpha^5)& \text{if } k=1\\
				\left(\frac{\alpha^3}{3}+\frac{\alpha^2}{2}+\frac{\alpha}{6}\right)\left(\pi-\frac{\cos(2g\pi\theta)}{g\pi\theta}+\frac{2\sin(2g\pi\theta)}{g} \right)+O(g^{-1}\theta\alpha^5+g^{-2}\theta^{-2}\alpha^3)& \text{if }k=0,
			\end{cases}
		\end{align*}
		where $A(m)$, $B(m)=O(\frac{\alpha^4}{m^2})$ which can be written down explicitly.
	\end{lemma}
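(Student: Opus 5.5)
The plan is to reduce $A(k,\theta)$ to one-variable sums of the form treated in \thref{sinecossum} and \thref{sine/ksum}, by shifting the summation variable in each inner sum, and then to sum the resulting expressions against the weight $j^2$.

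\emph{Reindexing.} Fix $j$ with $1\le j\le\alpha$. In the first inner sum put $m=n+j$, and in the second put $m=n-j$. Then
\[
\sum_{m=0}^g m^k\frac{\sin(2\pi(m-j)\theta)}{m-j}=\sum_{n=-j}^{g-j}(n+j)^k\frac{\sin(2\pi n\theta)}{n},
\qquad
\sum_{m=1}^g m^k\frac{\sin(2\pi(m+j)\theta)}{m+j}=\sum_{n=j+1}^{g+j}(n-j)^k\frac{\sin(2\pi n\theta)}{n},
\]
where the $n=0$ term is read as its limit $2\pi\theta\, j^k$. Expand $(n\pm j)^k$ binomially. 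For $n$ between $j+1$ and $g-j$, the two ranges overlap, and the terms with odd powers of $j$ cancel between the sums. What survives is
\[
2\sum_{i\ \text{even}}\binom{k}{i}j^i\sum_{n}n^{k-i-1}\sin(2\pi n\theta),
\]
plus boundary pieces: $n\in[-j,j]$ from the first sum, and $n\in(g-j,g+j]$ from the second. The negative-$n$ part of the first sum is folded to positive $n$ using the evenness of $\sin(2\pi n\theta)/n$.

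\emph{Main terms for $k\ge2$.} The dominant piece is $i=0$, namely $2\sum_{n\le g}n^{k-1}\sin(2\pi n\theta)$. By \thref{sinecossum} with angle $2\pi\theta$ this equals
\[
-g^{k-1}\frac{\cos((2g+1)\pi\theta)}{\sin(\pi\theta)}+O\bigl(g^{k-2}\theta^{-2}\bigr).
\]
Expanding $\cos((2g+1)\pi\theta)/\sin(\pi\theta)$ for small $\theta$ gives $\frac{\cos(2g\pi\theta)}{\pi\theta}-\sin(2g\pi\theta)+O(\theta)$. Half of this leading contribution, multiplied by $j^2$ and summed using $\sum_{j\le\alpha}j^2=\frac{\alpha^3}{3}+\frac{\alpha^2}{2}+\frac{\alpha}{6}$, produces the stated main term. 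The boundary pieces near $n=g$ have length $O(j)$. Comparing them with the full range, via the sine/cosine sums evaluated at shifted endpoints, costs $O(j^3g^{k-1}\theta)$ after a Taylor expansion in $j$. Weighting by $j^2$ gives $O(\alpha^5g^{k-1}\theta)$. The terms with $i\ge2$ give $j^2\cdot g^{k-3}\theta^{-1}$-type contributions, which weighted and summed give the $O(\alpha^4 g^{k-2}\theta^{-1})$ error. I have to be careful here: the correct factor of $2$ versus $1$ must come out of combining the two sums' boundaries.

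\emph{The cases $k=0$ and $k=1$.} For $k=0$, each inner sum is a partial sum of $\sin(n\cdot2\pi\theta)/n$, and \thref{sine/ksum} applies with $a\approx g\pm j$. The constant $\frac{\pi-2\pi\theta}{2}$ from the two sums gives the $\pi$ term after adding the $n=0$ term $2\pi\theta$. The $\cos(a\theta)/(2a\sin)$ terms, expanded in $j$, give $-\frac{\cos(2g\pi\theta)}{g\pi\theta}+\frac{2\sin(2g\pi\theta)}{g}$, with errors $g^{-1}\theta\alpha^5$ after weighting and $g^{-2}\theta^{-2}\alpha^3$ from the lemma's remainder. For $k=1$, the $i=0$ term is $\sum\sin(2\pi n\theta)$, which is summed exactly; it yields the $\cot(2\pi\theta)$ contribution. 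The low-$n$ boundary pieces $n\le j$, which do not decay, are collected into $\sum_m m\sin(2\pi m\theta)(A(m)+B(m))$ with $A(m),B(m)=O(\alpha^4/m^2)$, coming from expanding $\binom{k}{i}j^i/n$ over $n\le 2\alpha$.

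\emph{Main obstacle.} The hardest part is the bookkeeping of the boundary regions. The two shifted ranges do not coincide, and one must show that all odd-in-$j$ terms cancel to the stated precision. I would handle this by writing every boundary sum explicitly as a difference of two applications of \thref{sinecossum} at endpoints $g\pm j$, and then Taylor expanding in $j/g$ and $j\theta$.
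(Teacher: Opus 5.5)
Your reindexing $n=m\mp j$ with the binomial expansion of $(n\pm j)^k$ is a sound alternative to the paper's route. The paper instead Taylor-expands $\sin(2\pi(m\pm j)\theta)$ in $j$ and evaluates $\sum_j j^2/(m\pm j)$ by harmonic-number asymptotics. Both reduce to $2S_2\sum_{n\le g}n^{k-1}\sin(2\pi n\theta)$ with $S_2=\sum_{j\le\alpha}j^2=\frac{\alpha^3}{3}+\frac{\alpha^2}{2}+\frac{\alpha}{6}$. The gap is in what you conclude for $k\ge2$, in two places.

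First, there is no ``half''. By \thref{sinecossum} and $\cos((2g+1)\pi\theta)/\sin(\pi\theta)=\cos(2g\pi\theta)\cot(\pi\theta)-\sin(2g\pi\theta)$, the full $i=0$ term, weighted by $j^2$ and summed, is already the stated main term. The blocks near $n=g$ supply no main term. The second sum's block $g-j<n\le g+j$ and the piece $g-j<n\le g$ you remove when extending the overlap to $[1,g]$ are each $\approx 2jg^{k-1}\sin(2\pi g\theta)$, so they cancel. What remains is $O(j^2g^{k-1}\theta+j^2g^{k-2})$ per $j$, hence $O(\alpha^5g^{k-1}\theta)$ after weighting; your $O(j^3g^{k-1}\theta)$ would weight to $\alpha^6$.

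Second, you drop the remainder in \thref{sinecossum}, which contributes $S_2\cdot O(g^{k-2}\theta^{-2})=O(\alpha^3g^{k-2}\theta^{-2})$. This is not covered by $O(\alpha^5g^{k-1}\theta+\alpha^4g^{k-2}\theta^{-1})$ when, say, $\theta=g^{-1/2}$ and $\alpha$ is bounded or of size $\log g$, which is the regime of \thref{E(X)}. It also cannot be argued away, because it is a genuine term. For $k=2$, $\alpha=1$, the exact formula $\sum_{n\le G}n\sin(n\phi)=\frac{\sin(G\phi)}{4\sin^2(\phi/2)}-\frac{G\cos((G+\frac12)\phi)}{2\sin(\phi/2)}$ gives $A(2,\theta)=-g\left(\frac{\cos(2g\pi\theta)}{\pi\theta}-\sin(2g\pi\theta)\right)+\frac{\sin(2\pi(g+1)\theta)}{2\sin^2(\pi\theta)}+O(g\theta+\theta^{-1})$. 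So the $k\ge2$ line is provable only with a weaker error. The paper's proof actually ends with $O(g^{k-1}\alpha^5\theta+g^{k-1}\alpha^4\theta^{-1})$, which absorbs $\alpha^3g^{k-2}\theta^{-2}$ because $\theta^{-1}=o(g)$. Once you keep the remainder, your argument gives $O(\alpha^5g^{k-1}\theta+\alpha^3g^{k-2}\theta^{-2})$, which is contained in that bound.

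For $k=1$, the exact evaluation you invoke gives $2\sum_{n=1}^{g}\sin(2\pi n\theta)=\cot(\pi\theta)-\frac{\cos((2g+1)\pi\theta)}{\sin(\pi\theta)}$, i.e.\ $\cot(\pi\theta)$, not $\cot(2\pi\theta)$. The difference $S_2/\sin(2\pi\theta)\asymp\alpha^3\theta^{-1}$ is far outside the allowed error. The paper's own proof also ends with $\cot(\pi\theta)$, so $\cot(2\pi\theta)$ in the statement is a misprint that no argument reproduces.

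Your account of the low-$n$ pieces for $k=1$ is also off. There are no $i\ge2$ terms, and $\sum_{n=-j}^{j}(n+j)\frac{\sin(2\pi n\theta)}{n}=j\bigl(2\pi\theta+2\sum_{n\le j}\frac{\sin(2\pi n\theta)}{n}\bigr)=O(j^2\theta)$. These pieces are small, not non-decaying. Together with the correction $-2\sum_{n\le j}\sin(2\pi n\theta)$ and the block near $g$, they cost $O(\alpha^5\theta)$. Your route therefore gives directly $A(1,\theta)=S_2\left(\cot(\pi\theta)-\frac{\cos((2g+1)\pi\theta)}{\sin(\pi\theta)}\right)+O(\alpha^5\theta)$ for $\alpha\theta\ll1$, with nothing to collect into $A(m)+B(m)$.

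For $k=0$, your per-$j$ use of \thref{sine/ksum} is equivalent to the paper's interchange of summation: the sums over $n\le j$ coming from the folded first sum and from the second sum cancel exactly. However, the $-\sin(a\phi)/(2a)$ terms of that lemma contribute $-S_2\sin(2g\pi\theta)/g$, which you omit, as does the paper. This is harmless only when $g\theta^2\lesssim 1$, e.g.\ $\theta=g^{-1/2}$.
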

	
	\begin{proof}
		Firstly, assume $k\geq 2$. By the addition formula for $\sin$ and the Taylor series for $\sin$ and $\cos$, we have
		\[
		\sin(2\pi (m\pm j)\theta)=\sin(2\pi m\theta)\pm 2\pi j\theta\cos(2\pi m\theta)+O(\alpha^2\theta^2)
		\]
		By \thref{sinecossum}, the error term is bounded by $g^{k-1}\alpha^5\theta$. Therefore
		\begin{align*}
			A(k,\theta)&=\sum_{m=1}^gm^k\sin(2\pi m\theta)\sum_{j=1}^{\alpha}\left(\frac{j^2}{m+j}-\frac{j^2}{m-j}\right)\\
			&+2\pi\theta\sum_{m=1}^gm^k\cos(2\pi m\theta)\sum_{j=1}^{\alpha}\left(\frac{j^3}{m+j}-\frac{j^3}{m-j}\right)+O(g^{k-1}\alpha^5\theta).
		\end{align*}
		Furthermore, using \thref{sinecossum} the second term is bounded by $g^{k-2}\alpha^5$. Hence
		\begin{equation}\label{A(k,theta)sum1}
			A(k,\theta)=\sum_{m=1}^gm^k\sin(2\pi m\theta)\sum_{j=1}^{\alpha}\left(\frac{j^2}{m+j}+\frac{j^2}{m-j}\right)+O(g^{k-1}\alpha^5\theta).
		\end{equation}
		Considering the sum
		\begin{align}\label{jsquaredsum1}
			\sum_{j=1}^\alpha\frac{j^2}{m+j}&=\sum_{\ell=m+1}^{m+\alpha}\frac{(\ell-m)^2}{\ell}=\sum_{\ell=m+1}^{m+\alpha}\ell-2m\sum_{\ell=m+1}^{m+\alpha}1+m^2\sum_{\ell=m+1}^{m+\alpha}\frac{1}{\ell}\nonumber\\
			&=\sum_{\ell=1}^{m+\alpha}\ell-\sum_{\ell=1}^m\ell-2m\sum_{\ell=1}^{m+\alpha}1+2m\sum_{\ell=1}^m1+m^2\sum_{\ell=1}^{m+\alpha}\frac{1}{\ell}-m^2\sum_{\ell=1}^m\frac{1}{\ell}\nonumber\\
			&=-m\alpha+\frac{\alpha^2+\alpha}{2}+m^2\left(\sum_{\ell=1}^{m+\alpha}\frac{1}{\ell}-\sum_{\ell=1}^m\frac{1}{\ell}\right)
		\end{align}
		Using the asymptotic expansion for harmonic numbers, we have 
		\[
		\sum_{\ell=1}^{m+\alpha}\frac{1}{\ell}=\log(m+\alpha)+\gamma+\frac{1}{2(m+\alpha)}-\sum_{k=1}^{\infty}\frac{B_{2k}}{2k}\frac{1}{(m+\alpha)^{2k}}
		\]
		and 
		\[
		\sum_{\ell=1}^m\frac{1}{\ell}=\log m+\gamma+\frac{1}{2m}-\sum_{k=1}^{\infty}\frac{B_{2k}}{2k}\frac{1}{m^{2k}},
		\]
		where $\gamma$ is the Euler–Mascheroni constant and $B_{2k}$ is the Bernoulli numbers. Therefore 
		\begin{align}\label{jsquaredsum2}
			\sum_{\ell=1}^{m+\alpha}\frac{1}{\ell}-\sum_{\ell=1}^{\alpha}\frac{1}{\ell}&=\log\left(1+\frac{\alpha}{m}\right)-\frac{\alpha}{2m(m+\alpha)}-\sum_{k=1}^{\infty}\frac{B_{2k}}{2k}\left(\frac{1}{(m+\alpha)^{2k}}-\frac{1}{m^{2k}}\right)\nonumber\\
			&=\sum_{k=1}^{\infty}\frac{(-1)^{k+1}\alpha^k}{km^k}-\frac{\alpha}{2m(m+\alpha)}-\sum_{k=1}^{\infty}\frac{B_{2k}}{2k}\left(\frac{1}{(m+\alpha)^{2k}}-\frac{1}{m^{2k}}\right)\nonumber\\
			&=\frac{\alpha}{m}-\frac{\alpha^2}{2m^2}+\frac{\alpha^3}{3m^3}-\frac{\alpha}{2m(m+\alpha)}+\frac{\alpha}{6m(m+\alpha)^2}+\frac{\alpha^2}{12m^2(m+\alpha)^2}\nonumber\\
			&+\sum_{k=4}^{\infty}\frac{(-1)^{k+1}\alpha^k}{km^k}-\sum_{k=2}^{\infty}\frac{B_{2k}}{2k}\left(\frac{1}{(m+\alpha)^{2k}}-\frac{1}{m^{2k}}\right).
		\end{align}
		Therefore combining (\ref{jsquaredsum1}) and (\ref{jsquaredsum2}), we have
		\begin{equation}\label{sumjm+j}
			\sum_{j=1}^{\alpha}\frac{j^2}{m+j}=\frac{\alpha^3}{3m}+\frac{\alpha^2}{2m}+\frac{\alpha}{6m}+A(m),
		\end{equation}
		where $A(m)=O\left(\frac{\alpha^4}{m^2}\right)$ that can be explicitly written down as 
		\begin{align*}
			A(m)&=\frac{\alpha^3}{2m(m+\alpha)}-\frac{\alpha^2}{3(m+\alpha)^2}+\frac{\alpha^3}{6m(m+\alpha)^2}-\frac{\alpha^2}{12(m+\alpha)^2}\\
			&+\sum_{k=4}^{\infty}\frac{(-1)^{k+1}\alpha^k}{km^{k-2}}-m^2\sum_{k=2}^{\infty}\frac{B_{2k}}{2k}\left(\frac{1}{(m+\alpha)^2}-\frac{1}{m^{2k}}\right).
		\end{align*}
		Similarly
		\begin{equation}\label{sumjm-j}
			\sum_{j=1}^{\alpha}\frac{j^2}{m-j}=\frac{\alpha^3}{3m}+\frac{\alpha^2}{2m}+\frac{\alpha}{6m}+B(m)
		\end{equation}
		where $B(m)=O\left(\frac{\alpha^4}{m^2}\right)$, where $B(m)$ can be written down explicitly and has a similar expression to $A(m)$. Therefore combining (\ref{sumjm+j}) and (\ref{sumjm-j}) with (\ref{A(k,theta)sum1}) we have 
		\begin{equation}
			A(k,\theta)=\left(\frac{2\alpha^3}{3}+\alpha^2+\frac{\alpha}{3}\right)\sum_{m=1}^gm^{k-1}\sin(2\pi m\theta)+\sum_{m=1}^gm^k(A(m)+B(m))\sin(2\pi m\theta)+O(g^{k-1}\theta\alpha^5).
		\end{equation}
		Using \thref{sinecossum}, the second term is bounded by $g^{k-2}\alpha^4\theta^{-1}$, hence using \thref{sinecossum} and the Taylor series for $\sin$ and $\cos$ we have
		\begin{align*}
			A(k,\theta)&=-g^{k-1}\left(\frac{\alpha^3}{3}+\frac{\alpha^2}{2}+\frac{\alpha}{6}\right)\left(\frac{\cos((g+\frac{1}{2})2\pi \theta)}{\sin(\pi \theta)}\right)+O(g^{k-1}\alpha^5\theta+g^{k-1}\alpha^4\theta^{-1})\\
			&=-g^{k-1}\left(\frac{\alpha^3}{3}+\frac{\alpha^2}{2}+\frac{\alpha}{6}\right)\left(\frac{\cos(2g\pi\theta)}{\pi\theta}-\sin(2g\pi\theta)\right)+O(g^{k-1}\alpha^5\theta+g^{k-1}\alpha^4\theta^{-1}).
		\end{align*}
		Assume now that $k=1$, then 
		\[
		A(1,\theta)=\sum_{j=1}^{\alpha}j^2\sum_{m=0}^gm\frac{\sin(2\pi(m-j)\theta)}{m-j}+\sum_{j=1}^{\alpha}j^2\sum_{m=1}^gm\frac{\sin(2\pi(m+j)\theta)}{m+j}.
		\]
		Using the same arguments seen in the case when $k\geq 2$ we have 
		\begin{equation}\label{A(1,theta)sum}
			A(1,\theta)=\sum_{m=1}^gm\sin(2\pi m\theta)\sum_{j=1}^{\alpha}\left(\frac{j^2}{m+j}+\frac{j^2}{m-j}\right)+O(\theta\alpha^5).
		\end{equation}
		Therefore combining (\ref{A(1,theta)sum}) with (\ref{sumjm+j}) and (\ref{sumjm-j}) we have 
		\begin{align*}
			A(1,\theta)&=\left(\frac{2\alpha^3}{3}+\alpha^2+\frac{\alpha}{3}\right)\sum_{m=1}^g\sin(2\pi m\theta)+\sum_{m=1}^gm\sin(2\pi m\theta)(A(m)+B(m))+O(\theta\alpha^5).
		\end{align*}
		From the proof of \thref{sinecossum}, (see in the proof of \cite[Lemma~9.2]{Florea2017fourth}) we have 
		\begin{align*}
			A(1,\theta)&=\left(\frac{\alpha^3}{3}+\frac{\alpha^2}{2}+\frac{\alpha}{6}\right)\left(\frac{\cos(\pi\theta)-\cos((g+\frac{1}{2})2\pi\theta)}{\sin(\pi\theta)}\right)\\
			&+\sum_{m=1}^gm\sin(2\pi m\theta)(A(m)+B(m))+O(\theta\alpha^5)\\
			&=-\left(\frac{\alpha^3}{3}+\frac{\alpha^2}{2}+\frac{\alpha}{6}\right)\left(\frac{\cos(2g\pi \theta)}{\pi\theta}-\sin(2g\pi\theta)-\cot(\pi\theta)\right)\\
			&+\sum_{m=1}^gm\sin(2\pi m\theta)(A(m)+B(m))+O(\theta\alpha^5).
		\end{align*}
		Finally, assume $k=0$, then 
		\[
		A(0,\theta)=\sum_{j=1}^{\alpha}j^2\sum_{m=1}^g\frac{\sin(2\pi(m+j)\theta)}{m+j}+\sum_{j=1}^{\alpha}j^2\sum_{m=0}^g\frac{\sin(2\pi(m-j)\theta)}{m-j}.
		\]
		Focusing on the first sum, we let $k=m+j$, then
		\begin{align}\label{A(0,theta)sum1}
			\sum_{j=1}^{\alpha}j^2\sum_{m=1}^g\frac{\sin(2\pi(m+j)\theta)}{m+j}&=\sum_{k=2}^{g+\alpha}\frac{\sin(2\pi k\theta )}{k}\sum_{j=\max\{1,k-g\}}^{\min\{\alpha,k-1\}}j^2\nonumber\\
			&=\sum_{k=2}^\alpha\frac{\sin(2\pi k\theta)}{k}\sum_{j=1}^{k-1}j^2+\sum_{k=\alpha+1}^{g+1}\frac{\sin(2\pi k\theta)}{k}\sum_{j=1}^{\alpha}j^2\nonumber\\
			&+\sum_{k=g+2}^{g+\alpha}\frac{\sin(2\pi k\theta)}{k}\sum_{j=k-g}^{\alpha}j^2
		\end{align}
		Similarly, considering the second sum and let $k=m-j$ we have 
		\begin{align}\label{A(0,theta)sum2}
			\sum_{j=1}^\alpha j^2\sum_{m=0}^g\frac{\sin(2\pi (m-j)\theta)}{m-j}&=\sum_{k=-\alpha}^{g-1}\frac{\sin(2\pi k\theta)}{k}\sum_{j=\max\{1,-k\}}^{\min\{\alpha,g-k\}}j^2\nonumber\\
			&=\sum_{k=-\alpha}^{-1}\frac{\sin(2\pi k\theta )}{k}\sum_{j=-k}^\alpha j^2+\sum_{k=0}^{g-\alpha}\frac{\sin(2\pi k\theta)}{k}\sum_{j=1}^{\alpha}j^2\nonumber\\
			&+\sum_{k=g-\alpha+1}^{g-1}\frac{\sin(2\pi k\theta)}{k}\sum_{j=1}^{g-k}j^2.
		\end{align}
		Combining (\ref{A(0,theta)sum1}) and (\ref{A(0,theta)sum2}) we have 
		\begin{align}\label{A(0,theta)eqncomb}
			A(0,\theta)&=\sum_{k=1}^{g+\alpha}\frac{\sin(2\pi k\theta)}{k}\sum_{j=1}^{\alpha}j^2+\sum_{k=0}^{g-\alpha}\frac{\sin(2\pi k\theta)}{k}\sum_{j=1}^{\alpha}j^2\nonumber\\
			&-\sum_{k=g+2}^{g+\alpha}\frac{\sin(2\pi k\theta)}{k}\sum_{j=1}^{k-g-1}j^2+\sum_{k=g-\alpha+1}^{g-1}\frac{\sin(2\pi k\theta)}{k}\sum_{j=1}^{g-k}j^2.
		\end{align}
		Invoking \thref{sine/ksum} we have 
		\begin{equation}\label{A(0,theta)main1}
			\sum_{k=1}^{g+\alpha}\frac{\sin(2\pi k\theta)}{k}=\frac{\pi -2\pi\theta}{2}-\frac{\cos((g+\alpha+1)2\pi \theta)}{2(g+\alpha+1)\sin(\pi\theta)}-\frac{\sin((g+\alpha+1)2\pi\theta)}{2(g+\alpha+1)}+O(g^{-2}\theta^{-2})
		\end{equation}
		and
		\begin{equation}\label{A(0,theta)main2}
			\sum_{k=0}^{g-\alpha}\frac{\sin(2\pi k\theta )}{k}=\frac{\pi +2\pi \theta}{2}-\frac{\cos((g-\alpha+1)2\pi\theta)}{2(g-\alpha+1)\sin(\pi\theta)}-\frac{\sin((g-\alpha+1)2\pi\theta)}{2(g-\alpha+1)}+O(g^{-2}\theta^{-2}).
		\end{equation}
		Using a change of variables in each of the final two terms of (\ref{A(0,theta)eqncomb}) we have that this is equal to 
		\begin{equation} \label{A(0,theta)bound}
			\sum_{y=1}^{\alpha-1}\left(\frac{y^3}{3}+\frac{y^2}{2}+\frac{y}{6}\right)\left(\frac{-(g-y)\sin(2\pi(g+1+y)\theta)+(g+1+y)\sin(2\pi(g-y)\theta)}{(g+1+y)(g-y)}\right)=O(g^{-1}\theta\alpha^5).    
		\end{equation}
		
		Hence, combining (\ref{A(0,theta)main1}), (\ref{A(0,theta)main2}) and (\ref{A(0,theta)bound}) with (\ref{A(0,theta)eqncomb}) we obtain
		\begin{align*}
			A(0,\theta)&=\left(\frac{\alpha^3}{3}+\frac{\alpha^2}{2}+\frac{\alpha}{6}\right)\left(\pi -\frac{\cos(2\pi g\theta)}{g\sin(\pi \theta)}+\frac{2\sin(2\pi g \theta)}{g}\right)+O(g^{-2}\theta^{-2}\alpha^3)+O(g^{-1}\theta\alpha^5)\\
			&=\left(\frac{\alpha^3}{3}+\frac{\alpha^2}{2}+\frac{\alpha}{6}\right)\left(\pi -\frac{\cos(2\pi g\theta)}{g\pi\theta}+\frac{2\sin(2\pi g \theta)}{g}\right)+O(g^{-2}\theta^{-2}\alpha^3)+O(g^{-1}\theta\alpha^5)
		\end{align*}
	\end{proof}


\begin{thebibliography}{}
		
		\bibitem{AndradeJung2021} J.~C. Andrade and H.~Y. Jung, {\emph Mean values of derivatives of $L$-functions in function fields: IV}, J. Korean Math. Soc. {\bf 58}(6) (2021), no.~6, 1529-1547.
		
		\bibitem{AndradeJungShamesaldeen2021} J.~C. Andrade, H.~Y. Jung and A. Shamesaldeen, {\emph The integral moments and ratios of quadratic Dirichlet $L$-functions over monic irreducible polynomials in $\mathbb{F}_q[T]$}, Ramanujan J. {\bf 56}(1) (2021), 23-66
		
		\bibitem{AndradeKeating2012} J.~C. Andrade and J.~P. Keating, \emph{The mean value of $L(\frac12,\chi)$ in the hyperelliptic ensemble}, J. Number Theory {\bf 132}(12) (2012), 2793-2816.
		
		\bibitem{AndradeKeating2013}J.C. Andrade and J.P. Keating, {\emph Mean value theorems for $L$-functions over prime polynomials for the rational function field}, Acta Arith. {\bf 161}(4) (2013), 371-385.
		
		\bibitem{AndradeKeating2014} J.~C. Andrade and J.~P. Keating, {\emph Conjectures for the integral moments and ratios of $L$-functions over function fields}, J. Number Theory {\bf 142} (2014), 102-148.
		
		\bibitem{AndradeRajagopal2016} J.~C. Andrade and S. Rajagopal, \emph{Mean values of derivatives of $L$-functions in function fields: I}, J. Math. Anal. Appl. {\bf 443}(1) (2016), 526-541.
		
		\bibitem{BaeJung2019} S. Bae and H.~Y. Jung, \emph{Note on the mean values of derivatives of quadratic Dirichlet $L$-functions in function fields}, Finite Fields Appl. {\bf 57} (2019), 249-267.
		
		\bibitem{BaluyotPratt2022} S. Baluyot and K. Pratt, Dirichlet $L$-functions of quadratic characters of prime conductor at the central point, J. Eur. Math. Soc. (JEMS) {\bf 24}(2) (2022), 369-460.
		
		\bibitem{Best2026} C.~G. Best, {\emph Moments of derivatives of quadratic Dirichlet $L$-functions with prime conductor}, Res. Number Theory {\bf 12}(1) (2026), 1-17.
		
		\bibitem{BuiFlorea2020} H. Bui and A. Florea, \emph{Moments of Dirichlet $L$-functions with prime conductors over function fields}, Finite Fields Appl. {\bf 64} (2020), 101659.
		
		\bibitem{CFKRS2005} J.~B. Conrey, D.~W. Farmer, and J.~P. Keating,
		M. ~O. Rubinstein,  and N.~C. Snaith, \emph{Integral moments of $L$-functions}, Proc. London Math. Soc. (3) {\bf 91}(1) (2005), 33-104.
		
		\bibitem{DjankovicDjokic2021} G. Djankovi\'c{} and D. {\DJ}oki\'c, \emph{The mixed second moment of quadratic Dirichlet $L$-functions over function fields}, Rocky Mountain J. Math. {\bf 51}(6) (2021), 2003-2017.
		
		\bibitem{Florea2017first} A.~M. Florea, {\emph Improving the error term in the mean value of $L(\frac12,\chi)$ in the hyperelliptic ensemble}, Int. Math. Res. Not. IMRN {\bf 2017}(20), 6119-6148.
		
		\bibitem{Florea2017secondthird} A.~M. Florea, {\emph The second and third moment of $L(1/2,\chi)$ in the hyperelliptic ensemble}, Forum Math. {\bf 29}(4) (2017), 873-892.
		
		\bibitem{Florea2017fourth} A.~M. Florea, \emph{The fourth moment of quadratic Dirichlet L-functions over function fields}, Geom. Funct. Anal., {\bf 27}, (2017), 541-595.
		
		\bibitem{HardyLittlewood1916} G.~H. Hardy and J.~E. Littlewood, {\emph Contributions to the theory of the riemann zeta-function and the theory of the distribution of primes}, Acta Math. {\bf 41}(1) (1916), 119-196.
		
		\bibitem{Ingham1927} A.~E. Ingham, {\emph Mean-Value Theorems in the Theory of the Riemann Zeta-Function}, Proc. London Math. Soc. (2) {\bf 27}(4) (1927), 273-300. 
		
		\bibitem{Jung2022} H.~Y. Jung, {\emph Mean values of derivatives of quadratic prime Dirichlet $L$-functions in function fields}, Commun. Korean Math. Soc. {\bf 37}(3) (2022), 635-648.
		
		\bibitem{Jutila1981} M. Jutila, \emph{On the mean value of $L(\frac{1}{2},\,\chi )$\ for real characters}, Analysis {\bf 1}(2) (1981), 149-161.
		
		\bibitem{KeatingSnaith2000R} J.~P. Keating and N.~C. Snaith, \emph{Random matrix theory and $\zeta(1/2+it)$}, Comm. Math. Phys. {\bf 214}(1) (2000), 57-89.
		
		\bibitem{KeatingSnaith2000L}J.~P. Keating and N.~C. Snaith, \emph{Random matrix theory and $L$-functions at $s=1/2$}, Comm. Math. Phys. {\bf 214}(1) (2000), 91-110.
		
		\bibitem{KeatingWei2024} J.~P. Keating and F. Wei, \emph{Joint moments of higher order derivatives of CUE characteristic polynomials I: asymptotic formulae}, Int. Math. Res. Not. IMRN {\bf 2024}(12), 9607-9632.
		
		\bibitem{Rosen2002} M. Rosen, \emph{Number Theory in Function Fields}, Graduate Texts in Mathematics, Vol. 210, Springer-Verlang, New York, (2002).
		
		\bibitem{Rudnick2010} Z. Rudnick, {\emph Traces of high powers of the Frobenius class in the hyperelliptic ensemble}, Acta Arith. {\bf 143}(1) (2010), 81-99.
		
		\bibitem{Shen2021} Q. Shen, \emph{The fourth moment of quadratic Dirichlet $L$-functions}, Math. Z. {\bf 298}(2) (2021), 713-745.
		
		\bibitem{ShenStucky2026} Q. Shen and J. Stucky, \emph{The fourth moment of quadratic Dirichlet L-Functions II}, accepted to appear in Algebra and Number Theory (2026).
		
		\bibitem{Soundararajan2000} K. Soundararajan, \emph{Nonvanishing of quadratic Dirichlet $L$-functions at $s=\frac12$}, Ann. of Math. (2) {\bf 152}(2) (2000), 447-488
		
		\bibitem{Weil1948} A. Weil, {\emph Sur les courbes alg\'ebriques et les vari\'et\'es qui s'en d\'eduisent}, vol. 7. Hermann \& Cie, Paris (1948).
		
	\end{thebibliography}
\end{document}